\documentclass[11pt]{amsart}
\usepackage{amsmath,amsfonts,amsthm,amssymb}
\usepackage{hyperref}
\usepackage{graphicx}
\usepackage{color}

\theoremstyle{plain}
\newtheorem{theorem}{THEOREM}[section]
\newtheorem{lemma}[theorem]{LEMMA}
\newtheorem{proposition}[theorem]{PROPOSITION}
\newtheorem{corollary}[theorem]{COROLLARY}

\theoremstyle{remark}
\newtheorem*{note}{Note}
\newtheorem*{definition}{Definition}

\DeclareMathOperator{\supp}{supp}
\newcommand{\R}{\mathbb{R}}

\textwidth=13cm 
\begin{document}

\title[Binormal measures]{Binormal measures}
\author{Emmanuel P. Smyrnelis}


\author{Panayotis Smyrnelis}



\begin{abstract}
Our starting point is the  measure $\epsilon_x-\alpha_x\rho_x^{\omega_1}+\beta_x\rho_x^{\omega_2}$, where $\rho_x^{\omega_i}$ is the harmonic measure relative to $x \in \omega_1 \subset \overline{\omega}_1 \subset \omega_2$ and $\omega_i$ are concentric balls of $\R^n$; $\alpha_x$, $\beta_x$ are functions depending on $x$ and on the radii of $\omega_i$, $(i=1,2)$. Generalizing the above measure, we introduce and study the binormal measures as well as their relation to biharmonic functions.

\emph{MSC 2020}: 31B30, 31D05, 35B05.

\emph{Key words}: normal measures, binormal measures, biharmonic funtions, mean value properties, applications to PDE.
\end{abstract}

\maketitle
\section{\textbf{INTRODUCTION}}\label{sec1}

The characteristic mean value property of harmonic (resp. parabolic) functions involves the  measures $\lambda=\epsilon_x-\rho^{\omega}_x$, where $\epsilon_x$ is the Dirac measure at $x \in \omega$ and $\rho^{\omega}_x$ is the harmonic (resp. parabolic) measure relative to $\omega$ and $x \in \omega$, supported by the sphere $\partial \omega$ (resp. by the level surface $\partial \omega$ of the heat kernel). The adjoint potential of these measures is equal to zero on $\complement \overline{\omega}$ (the complement of $\overline{\omega}$), or equivalently, their swept measures satisfy $\lambda^{\complement \overline{\omega}}=0$. 

In 1944, G. Choquet and J. Deny, generalized the measure $\epsilon_x-\rho_x^\omega$, and  introduced the normal distribution. Moreover, they proved some characteristic properties of solutions of the equations $\Delta u=0$, and $\Delta^p u=0$ in $\R^n$. Next, in 1967, de La Pradelle following an idea of J. Deny \cite{deny}, extended the notion of normal measure to the frame of Brelot's theory \cite{brelot1}. Finally, in 1971, E. Smyrnelis using the extended notion of normal measure, proved several characteristic properties of normal measures and harmonic functions in Brelot spaces, applicable to solutions of $Lu=0$, where $L$ is a second order linear elliptic operator in $\R^n$.

On the other hand, biharmonic functions (that is, solutions of $\Delta^2 u=0$) satisfy a mean value property which involves the measures $\epsilon_x-\alpha_x\rho_x^{\omega_1}+\beta_x\rho_x^{\omega_2}$, where $\alpha_x$, $\beta_x$ are functions of $x \in \omega_1 \subset \overline{\omega}_1 \subset \omega_2$ and of the radii $R_1$, $R_2$ of the concentric spheres $\partial \omega_1$, $\partial \omega_2$ (cf. \cite{smyrnel5}). The scope of this article is to generalize this property and study some related issues, for the solutions of the equation $(L_2L_1)h=0$, where $L_i$ $(i=1,2)$ is a second order linear elliptic differential operator. The idea is to work in an elliptic biharmonic space, and use special general measures, applicable in particular, to the above equation; note that to this elliptic biharmonic space, we associate a $1$-harmonic and a $2$-harmonic space that in the applications correspond respectively to the solutions of the equations $L_1 h=0$, and $L_2 u=0$.

To this end, we first introduce in Section \ref{secc2} the binormal pair of measures $\Theta=(\lambda,\mu)$, supported by the compact set $K$, as the pair such that the swept measures on $\complement K$  of  $\Lambda:=(\lambda,0)$ and $ M:=(0,\mu)$ vanish. Since $\Theta=\Lambda+M$,  it follows that $\Theta^{\complement K}=\Lambda^{\complement K}+M^{\complement K}$ or $(\lambda,\mu)^{\complement K}=(\lambda,0)^{\complement K}+(0,\mu)^{\complement K}$ (cf. \cite{smyrnel3}).

The pair $(\lambda,0)$ is called a pure biharmonic pair if $(\lambda,0)^{\complement K}=(0,0)$. This is equivalent to the condition: `the pure adjoint potential pair vanishes on $\complement K$'.

The pair $(0,\mu)$ is called $2$-normal if $(0,\mu)^{\complement K}=(0,0)$ or, equivalently, if the $2$-adjoint potential vanishes on $\complement K$,
(see\cite{smyrnel2}).

Several examples of the aforementioned pairs of measures are given in Section \ref{sec:sec3}.



In Section \ref{secc4}, we prove characteristic mean value properties of
biharmonic pairs in relation to biharmonic pairs of measures.

Section \ref{secc5} is devoted to the study of properties of binormal pairs of measures.
Furthermore, we show that the linear combinations of the pairs $(\epsilon_x - \mu^{\complement K}_x, \nu^{\complement K}_x)$ are dense, for the vague topology, in the space of the pure binormal pairs of measures, where $(\mu^{\complement K}_x, \nu^{\complement K}_x) = (\epsilon_x, 0)^{\complement K}$. Analogous results hold for the  measures $\epsilon_x - \mu^{\complement K}_x$ (resp. $\epsilon_x - \lambda^{\complement K}_x$) in the space of $1$-normal (resp. $2$-normal) measures, where $\mu^{\complement K}_x$ is the swept nonnegative  measure of $\epsilon_x$ in the $1$-harmonic space (resp. $\lambda^{\complement K}_x$ is the swept nonnegative  measure of $\epsilon_x$ in the $2$-harmonic space). Finally, we examine the relation between binormal and normal measures.

\begin{note}In this work, we use the term `measure'  for `signed measure'.\end{note}

\section{\textbf{REMINDERS, DEFINITIONS AND PRELIMINARY RESULTS}}\label{secc2}

We shall work in a locally compact, connected space $\Omega$ with a countable basis. We denote by $\mathcal{U}$ (resp. $\mathcal{U}_c$) the set of all nonempty open sets (resp. the set of all nonempty relatively compact open sets), in $\Omega$.

Let $\mathcal{H}$ be a map that associates to each $U \in \mathcal{U}$ a linear subspace of  $C(U) \times C(U)$ which is composed of compatible pairs $(u_1, u_2)$ in the sense that if $u_1=0$ on an open set then $u_2=0$ there. The pairs of $\mathcal{H}(U)$ are called biharmonic on $U$.

On the other hand, a set $\omega \in \mathcal{U}_c$ with $\partial\omega\neq\varnothing$ is called $\mathcal{H}$-regular if
\begin{itemize}
\item[i)] The Riquier boundary value problem has only one solution $(H_{1}^{\omega,f}, H_{2}^{\omega,f})$ associated to the pair $f=(f_{1},f_{2})\in C(\partial\omega)\times C(\partial\omega)$.
\item[ii)] $f_{j}\geq 0$ $(j=1,2)$ implies $H_{1}^{\omega,f}\geq 0$, while $f_{2}\geq 0$ implies $H_{2}^{\omega,f}\geq 0$. Hence, there exists for every $x \in \omega$, a unique system $(\lambda_x^\omega,\mu_x^\omega,\nu_x^\omega)$ of Radon nonnegative measures on $\partial\omega$ such that $ H_{1}^{\omega,f}(x)=\int{f}_1\,d\mu_x^\omega
+\int\ {f}_{2}\,d\nu_x^\omega$, $H_{2}^{\omega,f}(x)=\int{f}_2\,d\lambda_x^\omega$.
\end{itemize}

A pair of functions $(v_1, v_2)$ defined on $U \in \mathcal{U}$ such that $v_{j}: U\longrightarrow \left(-\infty,+\infty\right]$ is called hyperharmonic if
\begin{itemize}
\item $v_j$ is lower semi-continuous,
\item and $ v_{1}(x)\geq\int{v}_1\,d\mu_x^\omega+\int\ {v}_{2}\,d\nu_x^\omega$,\:$v_{2}(x)\ge\int{v}_2\,d\lambda_x^\omega$, holds for every regular set $\omega \subset\overline{\omega}\subset U$ and every $x\in\omega$.
\end{itemize}
If the function $v_1$ is finite on a dense subset of $U$, then the hyperharmonic pair $(v_1, v_2)$ is called superharmonic on $U$. Finally, a  nonnegative  superharmonic pair $p=(p_{1},p_{2})$ will be called potential pair (on $U$), if $(h_{1},h_{2})= (0,0)$ is the only biharmonic pair satisfying $0\leq h_{j}\leq p_{j}\:(j=1,2)$.

The space $(\Omega, \mathcal{H})$ with the axioms  I, II, III, IV  is called biharmonic (see \cite{smyrnel3}). A biharmonic space is called elliptic if, for every $x\in\Omega$ and every regular set $\omega\owns x$, $\supp(\lambda_{x}^{\omega})=\supp(\mu_{x}^{\omega})=\supp(\nu_{x}^{\omega})= \partial\omega$; it will be called strong if there exists a strictly positive potential pair on $\Omega$. To a biharmonic space, we associate the underlying harmonic spaces $(\Omega,\mathcal{H}_{1})$,\:$(\Omega,\mathcal{H}_{2})$, which correspond respectively to the solutions of $L_{1}u_{1}=0$,\:$L_{2}u_{2}= 0$ in the classical case. We use respectively the prefixes $1$ or $2$, to refer to the aforementioned harmonic spaces.

Next, the hyperharmonic (resp. superharmonic; potential) pair $(v_1, v_2)$ is said pure, if given a nonnegative $2$-hyperharmonic function $v_2$ on $U$,\:  $v_{1}$ is the smallest nonnegative function  such that  $(v_{1}, v_{2})$ is a nonnegative hyperharmonic (resp. superharmonic; potential) pair on $U$. The $j$-harmonic (resp. biharmonic) support of a $j$-hyperharmonic function (resp. hyperharmonic pair) is defined as the smallest closed set such that the function (resp. the pair) is $j$-harmonic (resp. biharmonic) in its complement (j=1,2). We call Green's pair a pure potential pair with punctual biharmonic support. We also recall that if $\varphi$ is a numerical function on an open set $U$, the function $\widehat{\varphi}$
is defined as follows:$$\widehat{\varphi}(x)= \underset{\underset{y\in U}{y\to x}}{\lim}\inf \varphi(y).$$

In \cite{smyrnel8}, we define and study the adjoint biharmonic spaces corresponding to the adjoint equation $(L_2L_1)^* h= 0$, or  equivalently to the system $$L_{2}^{\ast}h_{2}=-h_{1},\ L_{1}^{\ast}h_{1}=0.$$ The asterisk symbol is used in the sequel to refer to adjoint spaces.

Our framework will be an elliptic strong biharmonic connected space. We assume the proportionality of $i$-Green's potentials and $i$-adjoint Green's potentials, and also the existence of a topological basis of completely determining domains for the associated $i$-harmonic spaces $(i=1,2)$. For the notions and notation not explained in this work, we refer to \cite{smyrnel3,herve}.

\begin{definition}\label{def1}
 Let $\lambda, \mu$ be Radon measures supported by a compact set $K\subset\Omega$ and let $\lambda = \lambda_1 - \lambda_2$, $\mu = \mu_1 - \mu_2$, with $\lambda_j\geq 0$, $\mu_j\geq 0$, $(j=1,2)$.
\begin{itemize}
\item The pair $(\lambda, \mu)$ is called binormal for $K$ if both $(\lambda, 0)^{\complement K} = (0,0)$ and $(0,\mu)^{\complement K} = (0,0)$.
\item The pair $(\lambda, 0)$ is called pure binormal for $K$ if $(\lambda, 0)^{\complement K} = (0, 0)$.
\item The pair $(0, \mu)$ is called  $2$-normal for $K$ if $(0, \mu)^{\complement K} = (0, 0)$.
\end{itemize}
\end{definition}
Let us consider the open subset $\omega \subset \Omega$, the points $x, y \in \Omega$, the Green's pair $(w_y, p^{2}_{y})$ of biharmonic support $\{y\}$ and the adjoint Green's pair $(w^{*}_{x}, p^{1*}_{x})$ of support $\{x\}$ (cf. \cite{smyrnel7,smyrnel8}). We denote by  $(W^{\omega}_{y}, P^{2,\omega}_{y})$  the swept pair on $\omega$ of the former pair, and by $(W^{*,\omega}_{x}, P^{1*,\omega}_{x})$ the swept pair on $\omega$ of the latter pair. We also consider the adjoint pure potential pair $p^{*}_{\nu} = (w^{*}_{\nu}, p^{1*}_{\nu})$ with associated nonnegative measure $\nu$, where $w^{*}_{\nu}(u)
= \int w^{*}_{x}(u)d\nu(x)$,\: $p^{1*}_{\nu}(u) = \int p^{1*}_{x}(u)d\nu(x)$, and $(W^{*,\omega}_{\nu}, P^{1*,\omega}_{\nu})$ its swept pair corresponding to the open set $\omega$.
\begin{lemma}\label{lemma1} We assert that $W^{*,\omega}_{\nu}(y) = \int W^{*,\omega}_{x}(y)d\nu(x) = \int W^{\omega}_{y}(x)d\nu(x)$.
\end{lemma}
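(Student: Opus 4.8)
The plan is to establish the two equalities in turn. The first, $W^{*,\omega}_{\nu}(y) = \int W^{*,\omega}_{x}(y)\,d\nu(x)$, expresses that balayage on $\omega$ commutes with integration in the representing measure $\nu$; the second, $\int W^{*,\omega}_{x}(y)\,d\nu(x) = \int W^{\omega}_{y}(x)\,d\nu(x)$, will be deduced from the pointwise reciprocity relation $W^{*,\omega}_{x}(y) = W^{\omega}_{y}(x)$, valid for all $x,y\in\Omega$, by integrating it against $\nu$ in the variable $x$.

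For the first equality, recall that by its very definition the adjoint pure potential pair $p^{*}_{\nu} = (w^{*}_{\nu}, p^{1*}_{\nu})$ is the $\nu$-integral of the adjoint Green pairs $p^{*}_{x} = (w^{*}_{x}, p^{1*}_{x})$, since $w^{*}_{\nu}(u) = \int w^{*}_{x}(u)\,d\nu(x)$ and $p^{1*}_{\nu}(u) = \int p^{1*}_{x}(u)\,d\nu(x)$. The balayage on $\omega$ of a nonnegative superharmonic pair is obtained through a nonnegative swept kernel; hence, writing the balayage of a potential as the potential of the swept representing measure and applying Fubini's theorem to the swept kernels of the pairs $p^{*}_{x}$, together with the positivity and additivity of balayage, one obtains $(W^{*,\omega}_{\nu}, P^{1*,\omega}_{\nu}) = \int (W^{*,\omega}_{x}, P^{1*,\omega}_{x})\,d\nu(x)$. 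Comparing first components at the point $y$ gives $W^{*,\omega}_{\nu}(y) = \int W^{*,\omega}_{x}(y)\,d\nu(x)$.

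The heart of the matter is the reciprocity $W^{*,\omega}_{x}(y) = W^{\omega}_{y}(x)$. To prove it, one invokes the standing hypothesis of proportionality between the $i$-Green potentials and the $i$-adjoint Green potentials $(i=1,2)$: this yields the symmetry, under the exchange of poles, of each of the two underlying Green kernels, as well as of the associated kernels swept on $\omega$. One then writes the biharmonic Green pair $(w_{y}, p^{2}_{y})$ and the adjoint biharmonic Green pair $(w^{*}_{x}, p^{1*}_{x})$ in terms of these $1$- and $2$-Green kernels through the defining relations of a pure potential pair (the first component being coupled to the second by $L_{2}h_{2} = -h_{1}$, resp. $L_{2}^{*}h_{2} = -h_{1}$, in the classical model of \cite{smyrnel8}), and transfers the sweeping from the pole $y$ to the pole $x$ by the reciprocity of balayage for Green potentials. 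This yields $W^{\omega}_{y}(x) = W^{*,\omega}_{x}(y)$ for every $x,y\in\Omega$; integrating this identity against $\nu$ and combining with the first equality closes the chain.

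The step I expect to be the main obstacle is precisely this reciprocity identity: one must propagate the scalar symmetry of the $1$- and $2$-Green kernels through the two coupled components of the (bi)harmonic Green pair and through the sweeping operation, while checking that the pure-potential normalization is preserved and that the Fubini exchanges involved are legitimate -- this is where the ellipticity and strength of the biharmonic space, the finiteness of the potentials off their poles, and the proportionality assumptions all enter. Once the pointwise reciprocity and the commutation of balayage with $\nu$-integration are secured, the two equalities of the statement follow at once.
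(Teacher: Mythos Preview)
Your plan matches the paper's structure: first equality by a Fubini-type commutation of balayage with $\nu$-integration, second equality from the pointwise reciprocity $W^{*,\omega}_{x}(y) = W^{\omega}_{y}(x)$. For the first step, however, the paper is more concrete than your appeal to an abstract ``swept kernel'' and additivity of balayage: it introduces the adjoint swept pair $(\alpha^{\omega}_{y}, \beta^{\omega}_{y})$ of $(\epsilon_y, 0)$ on $\omega$ and writes
\[
W^{*,\omega}_{\nu}(y) = \int w^{*}_{\nu}\,d\alpha^{\omega}_{y} + \int p^{1*}_{\nu}\,d\beta^{\omega}_{y};
\]
substituting the defining integrals of $w^{*}_{\nu}$ and $p^{1*}_{\nu}$ and applying Fubini then yields $\int W^{*,\omega}_{x}(y)\,d\nu(x)$ directly, since the same formula with $\nu$ replaced by $\epsilon_x$ is precisely $W^{*,\omega}_{x}(y)$. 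This avoids having to justify that balayage of an integral of pairs equals the integral of the balayages, which in the biharmonic setting is not entirely trivial to state. For the reciprocity identity, the paper does not re-derive it from the proportionality hypotheses as you outline; it simply invokes \cite[Lemma~4]{smyrnel8} together with a remark following \cite[Proposition~4.2]{smyrnel8}. Your sketch of how one would propagate the scalar Green symmetry through the coupled pair is reasonable, but in the paper's framework that work has already been done in \cite{smyrnel8} and is quoted as a black box.
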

\begin{proof} If  $(\alpha^{\omega}_{y}, \beta^{\omega}_{y})$  is the adjoint swept pair of $(\epsilon_y, 0)$ on $\omega$, then
\begin{align*}
W^{*,\omega}_{\nu}(y)&= \int w^{*}_{\nu}(u)d\alpha^{\omega}_{y}(u) + \int p^{1*}_{\nu}(u)d\beta^{\omega}_{y}(u) \\
&= \int \Big(\int w^{*}_{x}(u)d\nu(x)\Big) d\alpha^{\omega}_{y}(u)+\int \Big(\int p^{1*}_{x}(u)d\nu(x)\Big)d\beta^{\omega}_{y}(u) \\
&=\int \Big(\int w^{*}_{x}(u)d\alpha^{\omega}_{y}(u)\Big)d\nu(x) + \int \Big(\int p^{1*}_{x}(u)d\beta^{\omega}_{y}(u)\Big)d\nu(x).
\end{align*}
Since $W^{*,\omega}_{x}(y)=\int w^{*}_{x}(u)d\alpha^{\omega}_{y}(u)+\int p^{1*}_{x}(u)d\beta^{\omega}_{y}(u)$, using \cite[Lemma 4]{smyrnel8} and a  remark after the proof of  \cite[Proposition 4.2.]{smyrnel8}, we obtain  $W^{\omega}_{y}(x)=W^{*,\omega}_{x}(y)$, which completes the proof.
\end{proof}

\begin{theorem}\label{th1} Let $(\lambda, 0)$ be a  pair of  measures supported by the compact set $K$. Then, the following properties are equivalent:\begin{itemize}\item[(i)] $(\lambda, 0)$ is pure binormal relative to $K$.\item[(ii)] The adjoint pure potential pair $(w^{*}_{\lambda}, p^{1*}_{\lambda})$ vanishes on  $\complement K$.\end{itemize}
\end{theorem}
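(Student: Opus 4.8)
The plan is to characterize the swept pair $(\lambda,0)^{\complement K}$ in terms of the adjoint pure potential pair $(w^*_\lambda, p^{1*}_\lambda)$, using the duality between sweeping of $(\epsilon_x,0)$ in the biharmonic space and evaluation of swept Green's pairs that is encapsulated in Lemma \ref{lemma1} and its underlying references. The key identity to exploit is that for a point $y$ and the adjoint swept pair $(\alpha^\omega_y,\beta^\omega_y)$ of $(\epsilon_y,0)$ on an open set $\omega$, one has $W^\omega_y(x) = W^{*,\omega}_x(y)$, and more generally that sweeping commutes with integration against $\nu$ as in Lemma \ref{lemma1}. Taking $\omega = \complement K$ (or an exhaustion of it by relatively compact open sets and passing to the limit, since $\complement K$ need not be relatively compact) and $\nu = \lambda_1$ or $\nu = \lambda_2$ from the Jordan decomposition $\lambda = \lambda_1 - \lambda_2$, the statement $(\lambda,0)^{\complement K} = (0,0)$ should translate, by testing against the Green's pairs $(w_y,p^2_y)$, into the vanishing of $w^*_\lambda$ and $p^{1*}_\lambda$ on $\complement K$.

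First I would set up the precise meaning of $(\lambda,0)^{\complement K}$: by linearity of sweeping in the measure (valid since $\lambda_j \geq 0$ and sweeping is defined for nonnegative measures, then extended), $(\lambda,0)^{\complement K} = (\lambda_1,0)^{\complement K} - (\lambda_2,0)^{\complement K}$, and each $(\lambda_j,0)^{\complement K} = \int (\epsilon_x,0)^{\complement K}\, d\lambda_j(x)$. So $(\lambda,0)^{\complement K} = (0,0)$ iff the pair-valued measure $\int (\epsilon_x,0)^{\complement K}\, d\lambda(x) = (0,0)$. Next, to show (ii) $\Rightarrow$ (i): if $(w^*_\lambda, p^{1*}_\lambda)$ vanishes on $\complement K$, I would test $(\lambda,0)^{\complement K}$ against an arbitrary Green's pair $(w_y, p^2_y)$ with $y \in \complement K$; using Lemma \ref{lemma1} with $\nu = \lambda_1, \lambda_2$ gives that the potential of the swept measure at $y$ equals $W^{*,\complement K}_\lambda(y)$, which by the domination/proportionality hypotheses and the fact that $w^*_\lambda$ is already biharmonic (indeed zero) off $K$ forces the swept pair to have zero potential against a determining family, hence to vanish. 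The converse (i) $\Rightarrow$ (ii) runs the same computation backwards: vanishing of the swept pair makes $W^{*,\complement K}_\lambda(y) = W^{*,K\text{-part}}$ collapse, and since $\{(w_y,p^2_y) : y\}$ (or the associated Green's potentials) separates points / is completely determining for the adjoint harmonic structure, $w^*_\lambda = 0$ and $p^{1*}_\lambda = 0$ on $\complement K$.

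The main obstacle I anticipate is handling the sweeping onto the non-relatively-compact open set $\complement K$ rigorously: one must either invoke that in a strong biharmonic space sweeping onto arbitrary open sets is well-defined (via an increasing sequence $\omega_n \uparrow \complement K$ with $\overline{\omega}_n$ compact and a monotone limit of the swept potential pairs), and then justify interchanging this limit with the integration $\int \cdots d\nu(x)$ and with the pairing against $(w_y,p^2_y)$. This is where the assumed existence of a topological basis of completely determining domains and the proportionality of $i$-Green's and $i$-adjoint Green's potentials will be doing the real work, guaranteeing both that the limits exist and that "zero potential on a determining set" upgrades to "zero function on $\complement K$". The algebraic skeleton — the Fubini interchange and the symmetry $W^\omega_y(x) = W^{*,\omega}_x(y)$ — is exactly Lemma \ref{lemma1}, so once the limiting/regularity issues are dispatched the equivalence follows formally.
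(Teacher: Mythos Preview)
Your overall route is the paper's route: use Lemma~\ref{lemma1} to convert the duality $W^{\complement K}_y(x)=W^{*,\complement K}_x(y)$ into an identity between $W^{*,\complement K}_\lambda(y)$ and an integral of the swept Green's pair against the components of $(\lambda,0)^{\complement K}$, then read off the vanishing of the swept pair. But your emphasis is misplaced, and there is a genuine gap.

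The exhaustion worry is a non-issue. In the biharmonic-space framework of \cite{smyrnel3} that the paper assumes, balayage on an arbitrary open set (in particular on $\complement K$) and the reduced pair $(W^{\complement K}_y,P^{2,\complement K}_y)$ are already defined and satisfy the integral representations used in Lemma~\ref{lemma1} and in \cite[Theorems 7.11 and 7.13]{smyrnel3}. No limiting argument is needed.

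The real difficulty, which your sketch elides, is \emph{separating the two components} of $(\lambda,0)^{\complement K}=(\Lambda^{\complement K}_1,\Lambda^{\complement K}_2)$. After Lemma~\ref{lemma1} and \cite[Theorem 7.11]{smyrnel3}, what you get from $w^*_{\lambda_1}=w^*_{\lambda_2}$ on $\complement K$ is only the single family of identities
\[
\int w_y\,dB^{\complement K}_{1,1}+\int p^2_y\,dB^{\complement K}_{1,2}
=\int w_y\,dB^{\complement K}_{2,1}+\int p^2_y\,dB^{\complement K}_{2,2}
\qquad(y\in\Omega),
\]
where $B_i=(\lambda_i,0)$. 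This mixes the two components, and ``zero potential against a determining family'' does not by itself force both $\Lambda^{\complement K}_1$ and $\Lambda^{\complement K}_2$ to vanish, since cancellation between the $w_y$- and $p^2_y$-terms is a priori possible. The paper breaks this by an independent input: the compatibility of the adjoint biharmonic pair $(w^*_\lambda,p^{1*}_\lambda)$ on $\complement K$ gives $p^{1*}_{\lambda_1}=p^{1*}_{\lambda_2}$ there (so $\lambda$ is $1$-normal), and then \cite[Theorem 7.13]{smyrnel3} identifies the first component $B^{\complement K}_{i,1}$ with the $1$-swept measure of $\lambda_i$, yielding $B^{\complement K}_{1,1}=B^{\complement K}_{2,1}$ directly. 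Only after subtracting this does the displayed identity reduce to $P^{*2}_{B^{\complement K}_{1,2}}=P^{*2}_{B^{\complement K}_{2,2}}$, from which $B^{\complement K}_{1,2}=B^{\complement K}_{2,2}$ follows by injectivity of $\nu\mapsto P^{*2}_\nu$. Your proposal needs this two-step decoupling (compatibility $\Rightarrow$ $1$-normality $\Rightarrow$ equality of first components via \cite[Theorem 7.13]{smyrnel3}, then equality of second components) to go through; without it the argument does not close.
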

\begin{proof}  First, we notice  that as the pair $(w^{*}_\lambda,  p^{1*}_\lambda)$ is adjoint biharmonic on $\complement K$, and therefore compatible, if $w^{*}_{\lambda_1}$ = $w^{*}_{\lambda_2}$ on $\complement K$, then $p^{1*}_{\lambda_1}$ = $p^{1*}_{\lambda_2}$ there. (In other words,  if $(\lambda, 0)$ is pure binormal, then $\lambda$  is $1$-normal.)\\  $(ii) \Rightarrow (i)$.
 $w^{*}_{\lambda_1} = w^{*}_{\lambda_2}$  on $\complement K$   implies that the respective reduced functions satisfy  $W^{*\complement K}_{\lambda_1}$ = $W^{*\complement K}_{\lambda_2}$ in $\Omega$  and, by Lemma 2.2., $\int W^{\complement K}_y (x)d\lambda_1(x)$ = $\int W^{\complement K}_y (x)d\lambda_2(x)$.  In view of \cite[Theorem 7.11.]{smyrnel3}, we have $$\int W^{\complement K}_y (x)d\lambda_i(x) = \int w_y (x)dB^{\complement K}_{i,1}(x) + \int p^{2}_y (x)dB^{\complement K}_{i,2} (x),$$ where $\Lambda: = (\lambda, 0)$, $B_i: = (\lambda_i, 0)$  and  $\Lambda^{\complement K}=(\Lambda^{\complement K}_1, \Lambda^{\complement K}_2)$, $B^{\complement K}_i = (B^{\complement K}_{i,1}, B^{\complement K}_{i,2})$,  are the respective swept pairs  on $\complement K$; therefore $\Lambda^{\complement K}_i$ = $B^{\complement K}_{1,i} - B^{\complement K}_{2,i}$, $(i=1,2)$. According to \cite[Theorem 7.13.]{smyrnel3} (cf. also \cite{smyrnel2}), we have $B^{\complement K}_{1,1} = B^{\complement K}_{2,1}$ , and $\int p^{2}_y (x)dB^{\complement K}_{1,2}(x)$ = $\int p^{2}_y (x)dB^{\complement K}_{2,2}(x)$  or  $P^{*2}_{B^{\complement K}_{1,2}}$ = $P^{*2}_{B^{\complement K}_{2,2}}$  in  $\Omega$; it follows that  $B^{\complement K}_{1,2}$ = $B^{\complement K}_{2,2}$, hence $\Lambda^{\complement K}_1$ = $\Lambda^{\complement K}_2$ = $0$.
\\  $(i) \Rightarrow (ii)$.  The previous arguments can be reversed to prove the converse implication.
\end{proof}

The case of the pair $(0, \mu)$ with $(0,\mu)^{\complement K} = (0,0)$ was studied in \cite{smyrnel2}; it was established that $(0, \mu)^{\complement K}$ = $(0, 0)$  $\Leftrightarrow$ $P^{2*}_\mu$ = $0$  on ${\complement K}$.
\begin{corollary} We suppose that $\mathcal{H}_{1} = \mathcal{H}_{2}$  (that is, $L_1 = L_2$ in the classical case).
Let $(\lambda,0)$ be a pure binormal pair for the compact set $K$. Then, $\lambda$ is $1$- and $2$-normal, while $(\lambda, \lambda)$ is binormal for $K$.
\end{corollary}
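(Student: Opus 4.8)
The plan is to deduce the whole statement from Theorem \ref{th1}, from the remark that follows it, and from the single additional observation that the hypothesis $\mathcal{H}_1=\mathcal{H}_2$ forces the associated $1$- and $2$-harmonic spaces — and, by our standing assumptions, their adjoint spaces together with the corresponding (adjoint) Green kernels — to coincide.

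First I would apply Theorem \ref{th1} to the pure binormal pair $(\lambda,0)$: this yields that the adjoint pure potential pair $(w^{*}_{\lambda},p^{1*}_{\lambda})$ vanishes on $\complement K$. Reading off the second component, $p^{1*}_{\lambda}=0$ on $\complement K$, which is exactly the assertion that $\lambda$ is $1$-normal for $K$ (this is the parenthetical remark made inside the proof of Theorem \ref{th1}: pure binormality of $(\lambda,0)$ entails $1$-normality of $\lambda$). Because $\mathcal{H}_1=\mathcal{H}_2$, the potentials $p^{1*}_{\lambda}$ and $P^{2*}_{\lambda}$ are built from one and the same adjoint Green kernel, so $P^{2*}_{\lambda}=0$ on $\complement K$ as well; hence $\lambda$ is $2$-normal for $K$. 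This proves the first half of the corollary.

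For the second half I would fall back on the definition of a binormal pair: $(\lambda,\lambda)$ is binormal for $K$ if and only if both $(\lambda,0)^{\complement K}=(0,0)$ and $(0,\lambda)^{\complement K}=(0,0)$. The first equality is precisely the hypothesis that $(\lambda,0)$ is pure binormal. For the second, the equivalence recalled right after Theorem \ref{th1} (cf. \cite{smyrnel2}) gives $(0,\lambda)^{\complement K}=(0,0)\Leftrightarrow P^{2*}_{\lambda}=0$ on $\complement K$, and the right-hand side was just established; alternatively, one may invoke additivity of sweeping, $(\lambda,\lambda)^{\complement K}=(\lambda,0)^{\complement K}+(0,\lambda)^{\complement K}=(0,0)$. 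Either way, $(\lambda,\lambda)$ is binormal for $K$, which finishes the argument.

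The only step that calls for any care — the ``hard part'', modest as it is — is the identification of $p^{1*}_{\lambda}$ with $P^{2*}_{\lambda}$ on $\complement K$ under the hypothesis $\mathcal{H}_1=\mathcal{H}_2$: it relies on the fact that the equality of the two harmonic spaces propagates to their adjoint harmonic spaces and is compatible with the assumed proportionality of $i$-Green and $i$-adjoint Green potentials $(i=1,2)$, so that ``$1$-normal'' and ``$2$-normal'' designate the same class of measures here. Everything else is routine bookkeeping with Theorem \ref{th1} and the definitions of (pure) binormal and $2$-normal pairs.
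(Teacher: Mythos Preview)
Your proof is correct and follows essentially the same route as the paper's: apply Theorem \ref{th1} to obtain $p^{1*}_{\lambda}=0$ on $\complement K$ (hence $1$-normality), use $\mathcal{H}_1=\mathcal{H}_2$ to identify $p^{1*}_{\lambda}$ with $p^{2*}_{\lambda}$ (hence $2$-normality), and then conclude via additivity $(\lambda,\lambda)^{\complement K}=(\lambda,0)^{\complement K}+(0,\lambda)^{\complement K}=(0,0)$. The only cosmetic difference is that the paper emphasizes compatibility of the adjoint biharmonic pair to pass from $w^{*}_{\lambda}=0$ to $p^{1*}_{\lambda}=0$, while you read the second component off Theorem \ref{th1} directly; both amount to the same thing.
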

\begin{proof} It follows from Theorem \ref{th1} that $w^{*}_{\lambda_1} = w^{*}_{\lambda_2}$; as the pair $(w^{*}_{\lambda}, p^{1*}_{\lambda})$  is adjoint biharmonic
on $\complement K$,  and therefore  compatible,  we  have  $ p^{1*}_{\lambda}=0$  on  $\complement K$,  and by assumption, $p^{1*}_{\lambda} = p^{2*}_{\lambda}$. We also know that $(\lambda,\lambda)^{\complement K} = (\lambda,0)^{\complement K} + (0,\lambda)^{\complement K}$. Consequently, $\lambda$ is $1$- and $2$-normal, while by Definition \ref{def1}, $(\lambda,\lambda)$ is binormal.
\end{proof}

\section{\textbf{SOME EXAMPLES}}\label{sec:sec3}
The functions $u$ such that $\Delta^{2}u=0$ on an open set $U$ of $\R^n$ satisfy a characteristic  mean value property (see \cite{smyrnel5}): $$u(x)=\alpha_x\int ud \mu^{\omega_1}_x(z) -\beta_x\int ud \mu^{\omega_2}_x(z),$$ where  $\omega_i(x_0, R_i)$, $(i = 1, 2)$, are concentric balls with  $0<R_1<R_2$, $\overline \omega_2 \subset U$, \medskip  $\alpha_x$ = $\frac{R^{2}_2 - \rho^{2}}{R^{2}_2 - R^{2}_1}$, $\beta_x$ = $\frac{R^{2}_1 - \rho^{2}}{R^{2}_2 - R^{2}_1}$,   $\rho$ = $\Vert x -x_0\Vert$, $x\in\omega_1$ and $\mu^{\omega_i}_x$, $(i=1,2)$, are the respective harmonic measures.

Let $(w_y, p^{2}_y)$ be the Green's pair in $\R^n$ (cf. \cite{smyrnel7}); it is biharmonic on the open set  $U=\R^n \setminus \{y\}$. If $\overline \omega_2 \subset U$ and  $x\in \omega_1$, then we have $$w_y(x) = \alpha_x \int w_y(z)d\mu^{\omega_1}_x (z)- \beta_x \int w_y(z)d\mu^{\omega_2}_x (z)$$ or $$w^{*}_x(y) = \alpha_x \int w^{*^{}}_z(y)d\mu^{\omega_1}_x (z)-\beta_x \int w^{*}_z(y)d\mu^{\omega_2}_x (z).$$

$\mathbf{1)}$ We consider the compact set $K = \overline \omega_2$; the pair of  measures $(\lambda, 0)$ with $\lambda = \lambda_{1} - \lambda_{2}$,  where  $\lambda_{1} = \epsilon_x + \beta_x \mu^{\omega_2}_x$,  $\lambda_{2} = \alpha_x \mu^{\omega_1}_x$  is a pure binormal pair of measures. We can also take the decomposition  $\lambda = \lambda_1 -\lambda_2$,  where  $\lambda_1 = \alpha_x\epsilon_x + \beta_x\mu^{\omega_2}_x$,  $\lambda_2 = \beta_x\epsilon_x + \alpha_x \mu^{\omega_1}_x$. Moreover, we observe that the pair  $(\lambda, \lambda)$ is a binormal pair for K.

$\mathbf{2})$ Let $\nu$ be a  measure with compact support in $\omega_1$. If $y\in \complement\overline\omega_2$, we obtain
\begin{align*}
\int w^{*}_{\nu}(y)& = \int w^{*}_z(y) \int\alpha_xd\mu^{\omega_1}_xd\nu(x) - \int w^{*}_z(y) \int\beta_xd\mu^{\omega_2}_x(z)d\nu(x)\\
&= \int w^{*}_z(y)d\sigma(z) - \int w^{*}_zd\tau(z) .\end{align*}
The pair $(\lambda, 0)$, where  $\lambda = \nu + \tau - \sigma$ is pure biharmonic, while the pair  $(\lambda, \lambda)$ is a binormal pair.

\begin{note}Obviously, since every compact set is contained in a ball, we can construct pure binormal (resp. binormal)  pairs from a given  measure.\end{note}

$\mathbf{3})$ Starting from a  measure $\lambda$ supported by a compact set $ E\subset \R^n$, G. Choquet and J. Deny (\cite{choquet-deny}) have constructed another measure $\lambda'$ such that $d\lambda' = U^{\lambda}d\tau$  on
$\Hat{E} = E\cup(\underset{i}{\cup} E_i)$,
where the sets $E_{i}$ are the connected components of $\complement E$ with $\overline E_i$ compact,
$U^{\lambda}$ is the potential generated by $\lambda$, and $d\tau$ is the volume element (and so on for the polyharmonic case). The potential $U^{\lambda'}$ is defined by
\begin{align*}
U^{\lambda'}(x)&=\int G_1(x,y)d\lambda'(y)=\int G_1(x,y)U^{\lambda}(y)d \tau(y)\\
&=\int \int G_1(x,y)G_1(y,z)d \tau(y)d\lambda(z)=\int G_2(x,z)d\lambda(z), \end{align*}
where $G_1$ is the newtonian kernel, and $G_2(x,y) = \int G_1(x,z)G_1(z,y)d\tau(z)$ is the iterated kernel (see \cite{nicolesco}). If $U^{\lambda'}(x)= \int G_2(x,z)d\lambda(z) = 0$, on $\complement \Hat{E}$, then $$\Delta_x\int G_2(x,z)d\lambda(z)=\int G_1(x,z)d\lambda(z)=0 \text{ on } \complement \Hat{E};$$ therefore, the pair $(\lambda,\lambda)$ is binormal.

$\mathbf{4})$ Let $(u^{*}_2, 1)$ be a strictly positive adjoint biharmonic pair, and let $V^{*}_2$ be the associated kernel of the potential part of $u^{*}_2$. If $v^{*}_1$ is a nonnegative adjoint $1$-hyperharmonic function, the adjoint pair $(V^{*}_2v^{*}_1, v^{*}_1)$ is a pure hyperharmonic pair; it will be an adjoint pure potential pair, if $v^{*}_1$ is an adjoint $1$-potential, continuous  with a compact harmonic* support.  Let $\lambda$ be a  measure supported by a compact set $K \subset \Omega$; we have, $p^{1*}_\lambda(x) = \int p^{1*}_z (x)d\lambda(z)$  and  $V^{*}_2 1(y) = \int p^{2*}_x(y)d\xi(x)$, where $\xi$  is the  nonnegative  measure associated  to the adjoint potential $V^{*}_2 1$. Now, let $\lambda'$ be another  measure with density $p^{1*}_\lambda$  relative to $\xi$; we consider the following function
\begin{align*}
q^{*}_2(y)& = \int  p^{2*}_x(y)p^{1*}_\lambda(x)d\xi(x) = \int \Big( \int p^{1*}_z(x)p^{2*}_x(y)d\xi(x)\Big)d\lambda(z) \\
&= \int w^{*}_z(y)d\lambda(z)
= V^{*}_2 p^{1*}_\lambda(y) = w^{*}_\lambda(y). \end{align*}
Therefore, if $V^{*}_2 p^{1*}_\lambda = 0$ on $\complement K$, we also have $p^{1*}_\lambda = 0$. Consequently, the pair $(\lambda, 0)$ is pure binormal for $K$. On the other hand, if $\mu$ is a $2$-normal measure for $K$, then the pair $(\lambda, \mu)$ will be binormal for $K$.

\section{\textbf{SOME MEAN VALUES PROPERTIES OF BIHARMONIC PAIRS}}\label{secc4}

 Let us recall some further results on harmonic and biharmonic spaces (cf. \cite[parts X,XI]{smyrnel3}). In a harmonic space, we consider a potential $P$ on $\Omega$, which is finite, continuous, and strictly superharmonic. Let $\xi$ be its associated nonnegative  measure. We define the Dynkin's operators $L$, and $L'$ relative to $P$, as
\begin{equation}\label{dyn}
L_{P}f(x) = \limsup_{\omega\searrow x}\frac{f(x)-\int fd\rho^{\omega}_{x}}{P(x)-\int Pd\rho^{\omega}_{x}}\, ,\quad L'_{P}f(x) = \liminf_{\omega\searrow x}\frac{f(x)-\int fd\rho^{\omega}_{x}}{P(x)-\int Pd\rho^{\omega}_{x}},
\end{equation}
where $x \in \Omega$, $\omega$ is an open set with $\bar{\omega}$ compact, $f$ is a numerical function on $\Omega$ such that the numerator in \eqref{dyn} is defined, and $\rho^{\omega}_{x}$ is the harmonic measure. We can see that $L_{P}f(x) = L_{p^{\omega}}f(x)$ on the harmonic space $\omega$, where $p^{\omega}=P(x)-\int Pd\rho^{\omega}_{x}$, ($x\in\omega$). Moreover, if $V$ is the kernel associated to $P$, then we have $LV\phi = L'V\phi = \phi$ for $\phi\in C_{b}(\Omega)$. The following inequality  $Lu(x)\geq 0$ (or $ L'u(x)\geq 0$) on an open set $U\subset \Omega$ is also characteristic of hyperharmonic functions on $U$.

Let $L^j$, $L^{j'}$ be the operators in \eqref{dyn} associated to the space $(\Omega,\mathcal H_j)$ ($j=1,2$). We say that the pair $(f_1,f_2)$ of finite and continuous functions in the open set $U\subset\Omega$, is \emph{regular} if $L^1f_1$ and $L^2f_2$ (or $L^{1'}f_1$ and $L^{2'}f_2$) are finite and continuous in $U$.

Next, we define the operators
\begin{equation*}
\Gamma_{1}f(x) = \limsup_{\omega\searrow x}\frac{f(x)-\int fd\mu^{\omega}_{x}}{\int d\nu^{\omega}_{x}}\, ,\quad \Gamma'_{1}f(x) = \liminf_{\omega\searrow x}\frac{f(x)-\int fd\mu^{\omega}_{x}}{\int d\nu^{\omega}_{x}}.
\end{equation*}
 Since on a relatively compact open set there exists a strictly positive biharmonic pair $(v_1, v_2)$, we can assume, without loss of generality, that $v_2 = 1$. The Riesz's decomposition yields $v_1=p_1+h_1$, where $p_1$ is a $1$-potential and $h_1$ is an $1$-harmonic function on $\omega$. We have $L_{p_1} f(x) = \Gamma_1f(x)$, as well as $L'_{p_1} f(x) = \Gamma'_1f(x)$, and the relation
$\Gamma_1w_1 \ge w_2$ (or $\Gamma'_1 w_1 \ge w_2$), at the points where $w_1$ is finite, is a  characteristic property of the hyperharmonic pairs
 $(w_1, w_2)$.
\begin{proposition}\label{proposition1} Let $(\lambda, \mu)$ be a binormal pair of  measures supported by a compact set $K \subset U$, where $U$ is an open subset of $\Omega$, and $(u_1, u_2)$ a biharmonic pair of functions on $U$. Then, $\int u_1d\lambda = 0$, and $\int u_2d\mu = 0$.
\end{proposition}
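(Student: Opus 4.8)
The plan is to reduce the two integrals to pairings with the balayage of $(\lambda,\mu)$ onto the complement of a small regular neighbourhood of $K$, where binormality forces these balayages to vanish. Note that for the first identity one cannot simply invoke the classical theory of normal measures, since the component $u_1$ of a biharmonic pair is not $1$-harmonic in general; the biharmonic Riquier representation must be used instead.

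First I would fix a regular, relatively compact open set $\omega$ with $K\subset\omega\subset\overline\omega\subset U$ (possible since the $\mathcal H$-regular sets form a basis). As $(u_1,u_2)$ is biharmonic on a neighbourhood of $\overline\omega$ and $\omega$ is regular, uniqueness in the Riquier boundary value problem gives
$$u_1(x)=\int u_1\,d\mu^\omega_x+\int u_2\,d\nu^\omega_x,\qquad u_2(x)=\int u_2\,d\lambda^\omega_x\qquad(x\in\omega),$$
the measures $\lambda^\omega_x,\mu^\omega_x,\nu^\omega_x$ being carried by $\partial\omega\subset\complement K$. Since $\lambda$ and $\mu$ are carried by $K\subset\omega$, I would insert these representations into $\int u_1\,d\lambda$ and $\int u_2\,d\mu$ and apply Fubini (every measure in sight is bounded and carried by a compact set), obtaining
$$\int u_1\,d\lambda=\int u_1\,d\sigma_1+\int u_2\,d\sigma_2,\qquad\int u_2\,d\mu=\int u_2\,d\sigma'_2,$$
where $\sigma_1=\int\mu^\omega_x\,d\lambda(x)$, $\sigma_2=\int\nu^\omega_x\,d\lambda(x)$, $\sigma'_2=\int\lambda^\omega_x\,d\mu(x)$, all carried by $\partial\omega$.

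Then I would recognise the pairs $(\sigma_1,\sigma_2)$ and $(0,\sigma'_2)$ as swept pairs. Indeed $(\mu^\omega_x,\nu^\omega_x)=(\epsilon_x,0)^{\complement\omega}$ and $(0,\lambda^\omega_x)=(0,\epsilon_x)^{\complement\omega}$ — these are exactly the measures appearing in the reduced-function inequalities defining hyperharmonic pairs (cf. \cite{smyrnel3}) — and since balayage of pairs commutes with integration in $x$, one gets $(\sigma_1,\sigma_2)=(\lambda,0)^{\complement\omega}$ and $(0,\sigma'_2)=(0,\mu)^{\complement\omega}$. Because $\complement\omega\subset\complement K$, the transitivity of balayage yields $(\lambda,0)^{\complement\omega}=\big((\lambda,0)^{\complement K}\big)^{\complement\omega}$ and $(0,\mu)^{\complement\omega}=\big((0,\mu)^{\complement K}\big)^{\complement\omega}$; since $(\lambda,\mu)$ is binormal, $(\lambda,0)^{\complement K}=(0,\mu)^{\complement K}=(0,0)$, whence $\sigma_1=\sigma_2=\sigma'_2=0$ and $\int u_1\,d\lambda=\int u_2\,d\mu=0$.

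The Fubini interchange and the fact that the Riquier measures are carried by $\partial\omega$ are routine. The step carrying the real content — and the one I would take care to justify — is the identification $\int(\mu^\omega_x,\nu^\omega_x)\,d\lambda(x)=(\lambda,0)^{\complement\omega}$ (and its analogue for $\mu$), together with the transitivity $\big(\Lambda^{\complement K}\big)^{\complement\omega}=\Lambda^{\complement\omega}$ for $\complement\omega\subset\complement K$: both rest on the balayage calculus for pairs of measures of \cite{smyrnel3}. One should also check that a regular $\omega$ with $K\subset\omega\subset\overline\omega\subset U$ is available; if not, one replaces it by a regular exhaustion of a relatively compact neighbourhood of $K$, or works with the generalised Riquier solution on a resolutive $\omega$.
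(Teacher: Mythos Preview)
Your argument is correct and takes a genuinely different route from the paper's. Both proofs begin by sandwiching $K$ in a relatively compact $\omega\subset\overline\omega\subset U$, but then diverge. The paper invokes the integral-representation machinery of \cite{smyrnel6}: it writes $(u_1,u_2)$ on $\omega$ as the difference of two continuous potential pairs, decomposes each into a pure potential pair plus a $1$-potential, and shows the $1$-potential parts are $1$-harmonic on $\omega$; thus $u_1=p'_1-q'_1+h_1$ with $h_1$ $1$-harmonic and $(p'_1,p_2)$, $(q'_1,q_2)$ pure with associated measures supported in $\complement\omega$. One then uses $\int h_1\,d\lambda=0$ (since $\lambda$ is $1$-normal) and $\int w_y\,d\lambda=w^{*}_{\lambda}(y)=0$ for $y\in\complement K$ (Theorem~\ref{th1}) to kill both pieces. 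The second identity $\int u_2\,d\mu=0$ is dispatched separately by citing the harmonic-space result of \cite{smyrnel2}.

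Your approach bypasses the representation theory entirely: you feed the Riquier solution on a \emph{regular} $\omega$ directly into the integrals, identify the resulting measures as the swept pairs $(\lambda,0)^{\complement\omega}$ and $(0,\mu)^{\complement\omega}$, and kill them via the transitivity $(\Lambda^{\complement K})^{\complement\omega}=\Lambda^{\complement\omega}$ together with binormality. This is more elementary and treats both identities uniformly. What the paper's route buys is that it makes explicit the link to Theorem~\ref{th1} (the vanishing of the adjoint pure potential $w^{*}_{\lambda}$ on $\complement K$), which is the viewpoint driving the rest of Section~\ref{secc5}. Your route, by contrast, rests on the balayage calculus for pairs from \cite{smyrnel3}: the identifications $(\epsilon_x,0)^{\complement\omega}=(\mu^{\omega}_x,\nu^{\omega}_x)$, $(0,\epsilon_x)^{\complement\omega}=(0,\lambda^{\omega}_x)$ for regular $\omega$, the linearity $\int(\epsilon_x,0)^{\complement\omega}\,d\lambda(x)=(\lambda,0)^{\complement\omega}$, and transitivity for nested complements --- you are right to flag these as the steps carrying the content, and they are indeed available in that framework.
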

\begin{proof} First, we know that $\int u_2d\mu = 0$ if $\mu$ is a  $2$-normal  measure relative to a compact set $K \subset U$ (cf. \cite[Proposition 1]{smyrnel2}). Now, we shall prove the other equality. We consider a relatively compact open set  $\omega$, such that $K \subset \omega \subset \overline \omega \subset U$. By \cite[Proposition 1.7.]{smyrnel6}, there exist continuous potential pairs $(p_1, p_2)$ and $ (q_1, q_2)$, which are biharmonic on $\omega$, and such that $(u_1, u_2) +(q_1, q_2) = (p_1, p_2)$. We have the decompositions: $(p_1, p_2) = (p'_1, p_2)+(s_1, 0)$,  as well as $(q_1, q_2) = (q'_1, q_2)+(t_1, 0)$, where $(p'_1, p_2)$, $(q'_1, q_2)$ are pure potential pairs in $\Omega$, biharmonic on $\omega$, while $s_1$ and $t_1$ are $1$-potentials in $\Omega$ (see \cite[Proposition 2.8 and Proposition 2.2]{smyrnel6}); moreover, $s_1$ and $t_1$ are $1$-harmonic on $\omega$, since  $\Gamma_1p_1 = \Gamma_1p'_1 = p_2$, $\Gamma_1q_1 = \Gamma_1q'_1 = q_2$ on $\Omega$, while $\Gamma_1(p_1-p'_1) = 0$, $\Gamma_1(q_1-q'_1) = 0$ on $\omega$, (cf. \cite[Corollary 11.4]{smyrnel3}). Therefore, we have on $\omega$: $u_1 = p_1-q_1 = p'_1-q'_1+h_1$, where $h_1 = s_1-t_1$ is $1$-harmonic on $\omega$. Finally, the nonnegative measures $\zeta$ and $\xi$ associated to the pure pairs $(p'_1, p_2)$ and $(q'_1, q_2)$ (cf. \cite[(3.13)]{smyrnel6}), are supported by $\complement \omega$. As  $\int h_1d\lambda = 0$, (cf. \cite[Proposition 1]{smyrnel2}), we obtain
\begin{align*}
\int u_1d\lambda&=\int h_1d\lambda+\int (p'_1-q'_1)d\lambda=\int\int w_y(x)d\theta(y)d\lambda(x)\\
 &=\int \Big( \int w_y(x)d\lambda(x)\Big)d\theta(y)=0,
\end{align*}
where $\theta=\zeta - \xi$, since $\int w_y(x)d\lambda(x)=0$ holds on $\complement K \supset\complement \omega$.
\end{proof}

Next, we shall study the converse of Proposition \ref{proposition1}.
\begin{proposition}\label{proposition3} Let $U$ be an open subset of $\Omega$ and let $(u_1, u_2)$ be a pair of regular functions satisfying $\int u_1d\lambda_i = 0$,\;$\int u_2d\mu_i = 0$ for a family $(\lambda_i, \mu_i)$ of binormal pairs of  measures relative to compact sets $K_i\subset\omega_i$ with $\lambda_i \not=0$, \:$\mu_i\not=0$, such that $P^{1*}_{\lambda_{i,1}} \geq P^{1*}_{\lambda_{i,2}}$, $P^{2*}_{\mu_{i,1}} \geq P^{2*}_{\mu_{i,2}}$, for all $i\in I$,
the open sets $\omega_i$ forming a basis of $U$; then, the pair $(u_1, u_2)$ is biharmonic on $U$.
\end{proposition}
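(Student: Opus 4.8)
\medskip
\noindent\emph{Proof plan.} The plan is to reduce the biharmonicity of $(u_1,u_2)$ to the single scalar identity $\Gamma_1u_1=u_2$ on $U$, and then to force that identity by a maximum--principle argument played against the family $(\lambda_i)$. The first step is to observe that $u_2$ is $2$-harmonic on $U$: in the harmonic space $(\Omega,\mathcal{H}_2)$ the measures $\mu_i$ are $2$-normal for the compact sets $K_i\subset\omega_i$, the $\omega_i$ form a basis of $U$, $u_2$ is regular, $\mu_i\neq 0$, $P^{2*}_{\mu_{i,1}}-P^{2*}_{\mu_{i,2}}\geq 0$, and $\int u_2\,d\mu_i=0$; hence, by the converse mean--value property for normal measures in $(\Omega,\mathcal{H}_2)$ (equivalently, by the argument of the last two paragraphs applied verbatim to that harmonic space; see \cite{smyrnel2}), $u_2$ is $2$-harmonic on $U$.

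Granting this, $(u_1,u_2)$ is biharmonic on $U$ if and only if both $(u_1,u_2)$ and $(-u_1,-u_2)$ are hyperharmonic on $U$; by the characteristic inequality $\Gamma_1w_1\geq w_2$ for hyperharmonic pairs and the identity $\Gamma_1(-u_1)=-\Gamma'_1u_1$, this amounts to $\Gamma_1u_1\geq u_2\geq\Gamma'_1u_1$ on $U$. As $(u_1,u_2)$ is a regular pair, $\Gamma_1u_1$ and $\Gamma'_1u_1$ are finite, continuous and (for the pertinent version) equal on $U$, so it suffices to prove that $g:=\Gamma_1u_1-u_2$ vanishes identically on $U$. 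Fix $i\in I$ and a relatively compact $\mathcal{H}$-regular open set $\omega$ with $K_i\subset\omega\subset\overline{\omega}\subset U$, and let $V^{\omega}_1$ be the kernel of the $1$-potential part $p^{\omega}_1$ of a strictly positive biharmonic pair on $\omega$ with second component $1$, so $\Gamma_1V^{\omega}_1\varphi=\Gamma'_1V^{\omega}_1\varphi=\varphi$.

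Then $h_1:=u_1-V^{\omega}_1(\Gamma_1u_1)$ satisfies $\Gamma_1h_1=0$ and $\Gamma'_1h_1=\Gamma'_1u_1-\Gamma_1u_1\leq 0$, hence is $1$-harmonic on $\omega$. Since $(\lambda_i,\mu_i)$ is binormal, $(\lambda_i,0)$ is pure binormal for $K_i$, so $\lambda_i$ is $1$-normal for $K_i$ (the remark in the proof of Theorem \ref{th1}); by \cite[Proposition 1]{smyrnel2}, $\int h_1\,d\lambda_i=0$, and therefore $\int V^{\omega}_1(\Gamma_1u_1)\,d\lambda_i=\int u_1\,d\lambda_i=0$. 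On the other hand the pair $(V^{\omega}_1u_2,u_2)$ is biharmonic on $\omega$ (indeed $\Gamma_1(V^{\omega}_1u_2)=\Gamma'_1(V^{\omega}_1u_2)=u_2$ and $u_2$ is $2$-harmonic, so this pair and its opposite are hyperharmonic on $\omega$), whence Proposition \ref{proposition1} applied to the binormal pair $(\lambda_i,\mu_i)$ gives $\int V^{\omega}_1u_2\,d\lambda_i=0$. Subtracting, $\int V^{\omega}_1(g)\,d\lambda_i=0$ for every $i\in I$. Now, by Fubini and the assumed proportionality of $1$-Green and adjoint $1$-Green potentials, $\int V^{\omega}_1(g)\,d\lambda_i=\int g\,p^{1*,\omega}_{\lambda_i}\,d\eta_1$, where $\eta_1\geq 0$ is the measure of $p^{\omega}_1$ and $p^{1*,\omega}_{\lambda_i}$ is the adjoint $1$-potential of $\lambda_i$ in $\omega$; since $P^{1*}_{\lambda_{i,1}}-P^{1*}_{\lambda_{i,2}}$ already vanishes on $\complement K_i\supset\complement\omega$ (as $\lambda_i$ is $1$-normal for $K_i\subset\omega$), it is unaffected by sweeping onto $\omega$, so $p^{1*,\omega}_{\lambda_i}=P^{1*}_{\lambda_{i,1}}-P^{1*}_{\lambda_{i,2}}$, which is $\geq 0$ by hypothesis and vanishes on $\complement K_i$.

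Hence $\kappa_i:=(P^{1*}_{\lambda_{i,1}}-P^{1*}_{\lambda_{i,2}})\,\eta_1$ is a nonnegative measure carried by $K_i$, and it is nonzero because $\lambda_i\neq 0$ and the space is elliptic and strong, and $\int g\,d\kappa_i=0$. If $g(x_0)>0$ for some $x_0\in U$, the basis property yields $i_0$ with $x_0\in\omega_{i_0}\subset\{g>0\}$, so $g>0$ on $K_{i_0}\supset\supp\kappa_{i_0}$ and $\int g\,d\kappa_{i_0}>0$, a contradiction; the case $g(x_0)<0$ is excluded symmetrically. Thus $g\equiv 0$, i.e. $\Gamma_1u_1=u_2$, and $(u_1,u_2)$ is biharmonic on $U$. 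The step I expect to be the main obstacle is this last one: the precise form of the Fubini identity, the reduction $p^{1*,\omega}_{\lambda_i}=P^{1*}_{\lambda_{i,1}}-P^{1*}_{\lambda_{i,2}}$ on $\omega$, and the non-vanishing of $\kappa_i$ in the elliptic strong setting, together with the routine but necessary bookkeeping that a regular pair has $\Gamma_1u_1=\Gamma'_1u_1$ finite continuous and that $(V^{\omega}_1u_2,u_2)$ is biharmonic on $\omega$; everything else is a direct recombination of Proposition \ref{proposition1}, \cite[Proposition 1]{smyrnel2}, Theorem \ref{th1} and the characteristic properties of $\Gamma_1$ and $\Gamma'_1$.
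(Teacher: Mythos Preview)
Your strategy is sound and the outcome is correct, but you take a detour where the paper walks straight. Both proofs begin identically: $u_2$ is $2$-harmonic on $U$ by the converse mean--value property in $(\Omega,\mathcal H_2)$ \cite[Proposition~2]{smyrnel2}, and on each relatively compact $\omega\subset\overline\omega\subset U$ one builds the kernel $V^{\omega}_1$ so that $(V^{\omega}_1u_2,u_2)$ is biharmonic and, by Proposition~\ref{proposition1}, $\int V^{\omega}_1u_2\,d\lambda_i=0$. From here the paper simply sets $\phi:=V^{\omega}_1u_2-u_1$, observes $\int\phi\,d\lambda_i=0$ for every $i$, and invokes \cite[Proposition~3]{smyrnel2} (the $\lambda_i$ being $1$-normal with $P^{1*}_{\lambda_{i,1}}\geq P^{1*}_{\lambda_{i,2}}$) to conclude that $\phi$ is $1$-harmonic on $\omega$; hence $u_1=V^{\omega}_1u_2-\phi$ is the first component of a biharmonic pair on $\omega$, and one globalises.

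Your route instead introduces $g:=\Gamma_1u_1-u_2$, passes through $h_1=u_1-V^{\omega}_1(\Gamma_1u_1)$, and arrives at $\int V^{\omega}_1g\,d\lambda_i=0$. At this point you \emph{reprove} the content of \cite[Proposition~3]{smyrnel2} by the Fubini/maximum--principle argument with $\kappa_i$. That argument is essentially correct (the reduction $p^{1*,\omega}_{\lambda_i}=P^{1*}_{\lambda_{i,1}}-P^{1*}_{\lambda_{i,2}}$ holds because the two global potentials agree on $\complement\omega$, hence have equal r\'eduites there; the non-vanishing of $\kappa_i$ uses that in the elliptic strong setting the reference measure $\eta_1$ charges every nonempty open subset of $\omega$, while $P^{1*}_{\lambda_{i,1}}-P^{1*}_{\lambda_{i,2}}$ is adjoint $1$-harmonic, hence continuous, off $\supp\lambda_i$ and not identically zero since $\lambda_i\neq0$). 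But all of this is avoidable: once you have $\int V^{\omega}_1g\,d\lambda_i=0$, \cite[Proposition~3]{smyrnel2} already gives that $V^{\omega}_1g$ is $1$-harmonic, whence $g=\Gamma_1V^{\omega}_1g=0$; and more simply still, you never needed $g$ at all, since the same proposition applies directly to $\phi=V^{\omega}_1u_2-u_1$ (this is the paper's move). What your approach buys is self-containment relative to \cite{smyrnel2}; what the paper's approach buys is brevity and the complete avoidance of the technical verifications you rightly flag as the ``main obstacle''.
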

\begin{proof} Let $\omega$ be an open set with $\overline \omega \subset U$ and $\overline \omega$ compact. There is a strictly positive biharmonic pair $(v_1, v_2)$ on $\omega$ (cf. \cite[Theorem 6.9.]{smyrnel3}); without loss of generality, we may assume that $v_2=1$, and we can replace $U$ by $\omega$. In the associated $1$-harmonic space, the Riesz decomposition implies that $v_1 = p_1 + h_1$; we consider the kernel $V^{\omega}_{1}$ associated to the potential $p_1$ and the associated operators $L_1$,\,$\Gamma_1$ (cf. \cite[parts X, XI]{smyrnel3}). The pair $(V^{\omega}_1u_2, u_2)$ is biharmonic since $L_1V^{\omega}_1u_2 = \Gamma_1V^{\omega}_1u_2 = u_2$, and $u_2$ is a $2$-harmonic function (cf. \cite[Proposition 2]{smyrnel2})\footnote{Analogous notions and results are available in the adjoint case.}. It follows from Proposition \ref{proposition1} that $\int V^{\omega}_1u_2d\lambda_i = 0$ holds for all $\lambda_i$ satisfying the assumptions of Proposition \ref{proposition3}.
At this stage, we consider the function $\phi = V^{\omega}_1u_2 - u_1$ on $\omega$; since the functions $V^{\omega}_1u_2$ and $u_1$ are continuous on $\omega$, $\phi$ will also be continuous on $\omega$. Therefore, we obtain $\int \phi d\lambda_i = 0$. In addition, since $p^{1*}_{\lambda_i} = 0$ on $\complement K_i$ (see the beginning of the proof of Theorem \ref{th1}), $\phi$ is in view of \cite[Proposition 3]{smyrnel2} a $1$-harmonic function, denoted by $r_1$. Therefore, $u_1 = V^{\omega}_1u_2 - r_1$ is the first component of a biharmonic pair on $\omega$, namely, of the pair $( V^{\omega}_1u_2 - r_1, u_2)$. Finally, since the pair $(u_1, u_2)$ is biharmonic on every open set $\omega \subset \overline \omega \subset U$, with $\overline \omega$ compact, it will also be biharmonic on $U$.
\end{proof}

\begin{corollary}
Let $L_j$ ($j=1,2$) be a second order linear elliptic operator with regular coefficients defined on a domain $\Omega\subset\R^n$ ($n\geq 2$). We consider the biharmonic space of the solutions of the system $L_1 u_1=-u_2$, $L_2 u_2=0$ on $\Omega$. We suppose that there exists a positive potential pair; therefore there exists a positive $L_j$-potential ($j=1,2$) \cite[Partie XI]{smyrnel3}, and \cite[Chap VII]{herve}. Then, $$u_1(x)=\alpha_x\int u_1 d\mu_x^{\omega_1}-\beta_x\int u_1 d\mu_x^{\omega_2}$$ holds for every $x \in \Omega$, where $\omega_1$, $\omega_2$ are concentric balls such that $x \in \omega_1\subset \overline{\omega_2}\subset\Omega$ (cf. Section \ref{sec:sec3}). This property is characteristic of biharmonic\footnote{The function $u_1$ is called biharmonic on $\Omega$, if it is the first component of a biharmonic pair on $\Omega$.} functions on $\Omega$. We notice that if $L_1=L_2$, then we can also write $u_2(x)=\alpha_x\int u_2 d\mu_x^{\omega_1}-\beta_x\int u_2 d\mu_x^{\omega_2}$.
\end{corollary}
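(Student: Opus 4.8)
The plan is to read the statement off the abstract Propositions \ref{proposition1} and \ref{proposition3}, applied to the biharmonic space attached to $L_1u_1=-u_2$, $L_2u_2=0$ and fed with the explicit measures constructed in Section \ref{sec:sec3}. For necessity, suppose $(u_1,u_2)$ is a biharmonic pair on $\Omega$, fix $x$ and concentric balls with $x\in\omega_1\subset\overline{\omega_1}\subset\omega_2\subset\overline{\omega_2}\subset\Omega$. By the first example of Section \ref{sec:sec3}, the pair $(\lambda,0)$ with $\lambda=\epsilon_x+\beta_x\mu^{\omega_2}_x-\alpha_x\mu^{\omega_1}_x$ is pure binormal, hence binormal (its second component vanishing), for $K=\overline{\omega_2}\subset\Omega$. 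Proposition \ref{proposition1} then gives $\int u_1\,d\lambda=0$, which is exactly $u_1(x)=\alpha_x\int u_1\,d\mu^{\omega_1}_x-\beta_x\int u_1\,d\mu^{\omega_2}_x$.

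For the converse, let $u_1$ be regular and satisfy this identity for every admissible $x,\omega_1,\omega_2$, and set $u_2:=\Gamma_1u_1$, which by regularity is finite and continuous. I would prove $(u_1,u_2)$ biharmonic by means of Proposition \ref{proposition3}, applied over a basis of balls of $\Omega$ to binormal pairs $(\lambda_i,\mu_i)$ where $\lambda_i$ is of the type constructed in Section \ref{sec:sec3}, attached to a point $x_i$ and to concentric balls with closures in $\omega_i$ (so $\int u_1\,d\lambda_i=0$ is precisely the hypothesis), and $\mu_i:=\epsilon_{x_i}-\sigma_i$ with $\sigma_i$ the harmonic measure at $x_i$ of the associated $2$-harmonic space; $\mu_i$ is a nonzero $2$-normal measure, so $(\lambda_i,\mu_i)$ is binormal and $P^{2*}_{\mu_{i,1}}\geq P^{2*}_{\mu_{i,2}}$ holds because $\sigma_i$ is a balayage of $\epsilon_{x_i}$. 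The measures of the first kind being dense in the pure binormal measures (Section \ref{secc5}), the relation ``$\int u_1\,d\lambda=0$'' propagates to all pure binormal $\lambda$; combining this with ``$\int u_2\,d\mu_i=0$'' and checking the remaining, formal hypotheses of Proposition \ref{proposition3} ($\lambda_i\neq0\neq\mu_i$, which hold because of the atoms at the centres, and the potential domination for the $\lambda_i$), one obtains that $(u_1,u_2)$ is biharmonic, i.e. $u_1$ is biharmonic.

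The hard part is the equality $\int u_2\,d\mu_i=0$, i.e. that $u_2=\Gamma_1u_1$ is $2$-harmonic, or equivalently $(L^2L^1)u_1=0$: in the classical case this is the converse of the biharmonic Pizzetti formula, and in general it should be obtained by letting $\omega_1,\omega_2$ shrink to $x$ in the mean value identity and extracting, through Dynkin's operators $L^1$ and then $L^2$ applied to its numerator, the pointwise relation $L^2L^1u_1(x)=0$; with this in hand the argument closes. Finally, when $L_1=L_2$ one has $\mathcal H_1=\mathcal H_2$, the $\mu^{\omega_i}_x$ are also the $2$-harmonic measures, and the corollary following Theorem \ref{th1} applied to the pure binormal $\lambda$ above shows that $\lambda$ is $2$-normal; Proposition \ref{proposition1} applied to the pair $(0,\lambda)$ then yields $\int u_2\,d\lambda=0$, that is $u_2(x)=\alpha_x\int u_2\,d\mu^{\omega_1}_x-\beta_x\int u_2\,d\mu^{\omega_2}_x$.
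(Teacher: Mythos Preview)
The paper states this corollary without proof, so the intended argument is indeed to specialize Propositions~\ref{proposition1} and~\ref{proposition3} to the concrete measures of Section~\ref{sec:sec3}. Your forward direction matches this: once $(\lambda,0)$ with $\lambda=\epsilon_x-\alpha_x\mu^{\omega_1}_x+\beta_x\mu^{\omega_2}_x$ is pure binormal for $K=\overline{\omega_2}$, Proposition~\ref{proposition1} gives the mean value identity. One caution: Section~\ref{sec:sec3} establishes pure binormality of that $\lambda$ \emph{only in the case $L_1=L_2=\Delta$}, by quoting the Pizzetti formula from \cite{smyrnel5} for the Green pair $(w_y,p^2_y)$. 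For general $L_j$ you are tacitly using the forward direction of the corollary on $(w_y,p^2_y)$ in order to get the very binormality you then feed into Proposition~\ref{proposition1}; this is circular unless you verify directly (e.g.\ via the explicit Riquier representation or the kernel identity $w_y=V_2p^2_y$) that $\int w_y\,d\lambda=0$ on $\complement K$ before invoking Proposition~\ref{proposition1}.

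For the converse you correctly spot the real difficulty: Proposition~\ref{proposition3} needs a \emph{pair} $(u_1,u_2)$ with $\int u_2\,d\mu_i=0$, while the hypothesis of the corollary concerns $u_1$ only. Your remedy---set $u_2:=\Gamma_1u_1$ and argue $L^2u_2=0$ by shrinking the balls---is the right idea, but it is only sketched, and this step is not supplied by anything in the paper either; it is essentially the converse Pizzetti argument for the operator $L_2L_1$, which must be carried out explicitly. Two smaller points: the detour through density of the elementary measures (Section~\ref{secc5}) is unnecessary, since Proposition~\ref{proposition3} only asks for one binormal pair per basic open set; and the potential domination hypotheses $P^{1*}_{\lambda_{i,1}}\ge P^{1*}_{\lambda_{i,2}}$ are not ``formal''---with the natural decomposition $\lambda_1=\epsilon_x+\beta_x\mu^{\omega_2}_x$, $\lambda_2=\alpha_x\mu^{\omega_1}_x$ they require a short computation (or the alternative decomposition mentioned in Section~\ref{sec:sec3}). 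Your treatment of the case $L_1=L_2$ via Corollary~2.4 and Proposition~\ref{proposition1} applied to $(0,\lambda)$ is clean and matches the paper's remark.
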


\section{\textbf{PROPERTIES OF BINORMAL PAIRS OF MEASURES}}\label{secc5}

Let $\lambda, \mu$ be  measures, $\lambda=\lambda_1-\lambda_2$, $\mu=\mu_1-\mu_2$, with $\lambda_i \geq 0$, $\mu_i \geq 0$, $(i=1,2)$, and consider the pairs $\Lambda:=(\lambda, 0)$, $B_i:=(\lambda_i, 0)$, as well as the pair $M:=(0, \mu)$. Therefore, we have $\Lambda^{\complement K}_{i} = B^{\complement K}_{1,i} - B^{\complement K}_{2,i}$ and $M^{\complement K} = (0, \mu)^{\complement K}$ (cf. Section \ref{sec1} and the proof of Theorem \ref{th1}).
\begin{theorem}\label{theorem2} The following are equivalent:
\begin{itemize}
\item[(i)] The pair $\Lambda=(\lambda, 0)$ is pure binormal and the pair $M=(0, \mu)$ is $2$-normal.
\item[(ii)] $\Lambda^{\complement K}_{i} = 0$ and $M^{\complement K}_{i} = 0$, $(i=1,2)$.
\item[(iii)] $\int (p_{1} - q_{1})d\lambda = 0$ and $\int (p_{2} - q_{2})d\mu = 0$, where $(p_{1}, p_{2})$, $(q_{1}, q_{2})$ are potentials pairs in $\Omega$ with support in $\complement K$.
\item[(iv)] The previous potential pairs could be pure potential pairs.
\item[(v)] $\int u_1d\lambda = 0$ and $\int u_2d\mu = 0$ hold for every biharmonic pair of functions $(u_1, u_2)$ on an open set $\omega\supset K$.
\item[(vi)] $\lambda = \xi - \Xi^{\complement K}_{1}$, and $\Xi^{\complement K}_{2}=0$, where $(\xi, 0)^{\complement K} = (\Xi^{\complement K}_{1}, \Xi^{\complement K}_{2})$ with $\xi$ the part of $\lambda$ supported by the set of points of $K$ where $\complement K$ is $1$-thin; $\mu = \tau - T^{\complement K}_{2}$, where $(0, \tau)^{\complement K} = (T^{\complement K}_{1}, T^{\complement K}_{2})$, with $\tau$ the part of $\mu$ supported by the set of points where $\complement K$ is $2$-thin.
\end{itemize}
\end{theorem}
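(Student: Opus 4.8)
The plan is to prove Theorem \ref{theorem2} by establishing a cycle of implications among the six conditions, splitting the work along the ``pure binormal'' coordinate (the $\lambda$-statements) and the ``$2$-normal'' coordinate (the $\mu$-statements), since by Definition \ref{def1} and the additivity $(\lambda,\mu)^{\complement K}=(\lambda,0)^{\complement K}+(0,\mu)^{\complement K}$ these two coordinates decouple. For the $\mu$-half, nearly every equivalence is already available: $(0,\mu)^{\complement K}=(0,0)\Leftrightarrow P^{2*}_\mu=0$ on $\complement K$ was recalled right after Theorem \ref{th1}, $\int u_2\,d\mu=0$ for $2$-harmonic $u_2$ is \cite[Proposition 1]{smyrnel2}, its converse over a basis is \cite[Proposition 3]{smyrnel2}, and the sweeping formula $\mu=\tau-T^{\complement K}_2$ with $\tau$ supported on the $2$-thin points is the $2$-normal case treated in \cite{smyrnel2}. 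So for the $\mu$-coordinate I would simply cite those results and concentrate the real argument on the $\lambda$-coordinate.

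For the $\lambda$-coordinate I would run the following route. \textbf{(i)$\Leftrightarrow$(ii) for $\Lambda$.} This is immediate from the definition of pure binormal together with Theorem \ref{th1}: $\Lambda^{\complement K}=(0,0)$ is exactly $\Lambda^{\complement K}_1=\Lambda^{\complement K}_2=0$, and by Theorem \ref{th1} this is equivalent to $(w^*_\lambda,p^{1*}_\lambda)$ vanishing on $\complement K$. \textbf{(ii)$\Rightarrow$(v).} Having the hypothesis of Proposition \ref{proposition1} in hand, apply that proposition directly: if $(\lambda,\mu)$ is binormal and $(u_1,u_2)$ biharmonic on $\omega\supset K$, then $\int u_1\,d\lambda=0$ and $\int u_2\,d\mu=0$. \textbf{(v)$\Rightarrow$(iv)$\Rightarrow$(iii).} For (v)$\Rightarrow$(iv), take the pure potential pairs $(p_1',p_2)$, $(q_1',q_2)$ appearing in the proof of Proposition \ref{proposition1}; since their associated measures $\zeta,\xi$ are supported in $\complement K$ and they are biharmonic on $\omega\supset K$, condition (v) applied (to the biharmonic-on-$\omega$ pair) gives $\int(p_1'-q_1')\,d\lambda=0$ and $\int(p_2-q_2)\,d\mu=0$. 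The implication (iv)$\Rightarrow$(iii) is formal: a pure potential pair is a particular potential pair, so (iii) is a weaker-looking statement already covered; conversely I would note that every potential pair with support in $\complement K$ decomposes via the Riesz-type splitting of \cite[Proposition 2.8, Proposition 2.2]{smyrnel6} as a pure potential pair plus $(s_1,0)$ with $s_1$ a $1$-potential that is $1$-harmonic on $\omega$, and $\int s_1\,d\lambda=0$ holds because $(s_1,0)$ is itself pure biharmonic on $\omega$ (this is exactly the $1$-normal vanishing recalled at the start of the proof of Theorem \ref{th1}); so (iv) and (iii) carry the same information. \textbf{(iii)$\Rightarrow$(ii).} Use \cite[Theorem 7.11]{smyrnel3} in the form already invoked in the proof of Theorem \ref{th1}: for $y\in\complement K$, $\int W^{\complement K}_y(x)\,d\lambda_i(x)=\int w_y(x)\,dB^{\complement K}_{i,1}(x)+\int p^2_y(x)\,dB^{\complement K}_{i,2}(x)$; choosing the potential pairs in (iii) to run over the Green's pairs $(w_y,p^2_y)$ with $y\in\complement K$ (which are pure potential pairs with punctual support, hence supported in $\complement K$), the hypothesis $\int(p_1-q_1)\,d\lambda=0$ forces, together with the compatibility/uniqueness statements \cite[Theorem 7.13]{smyrnel3}, \cite{smyrnel2}, that $\Lambda^{\complement K}_1=\Lambda^{\complement K}_2=0$; this is precisely the reversal of the argument in Theorem \ref{th1}. \textbf{(ii)$\Leftrightarrow$(vi).} For $\Lambda=(\lambda,0)$, split $\lambda$ into its restriction $\xi$ to the set of $x\in K$ at which $\complement K$ is $1$-thin (the ``carried'' part in the sense of the fine-support theory of \cite{smyrnel3}) and the remainder $\lambda-\xi$, which is swept without loss onto $\complement K$; the general identity $\lambda^{\complement K}=(\lambda-\xi)+\xi^{\complement K}$ combined with $\Lambda^{\complement K}_1=0$, $\Lambda^{\complement K}_2=0$ rearranges to $\lambda=\xi-\Xi^{\complement K}_1$ with $\Xi^{\complement K}_2=0$, and conversely this formula visibly yields $\Lambda^{\complement K}_i=0$. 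I would mirror this verbatim for $\mu$, $\tau$, $T^{\complement K}_j$, citing the $2$-normal treatment in \cite{smyrnel2}.

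I expect the main obstacle to be the implication (iii)$\Rightarrow$(ii) — that testing against potential (or pure potential) pairs supported in $\complement K$ is \emph{enough} to conclude the swept pairs vanish, rather than merely that $\lambda$ is $1$-normal. The delicate point is that $\int(p_1-q_1)\,d\lambda=0$ a priori only controls the first-component (the $W^{\complement K}$-) integral, so one must recover the vanishing of the second swept component $\Lambda^{\complement K}_2$; this is exactly where one needs the punctual-support Green's pairs $(w_y,p^2_y)$ to span enough test functions and then invoke \cite[Theorem 7.13]{smyrnel3} (the harmonic-space rigidity that $B^{\complement K}_{1,1}=B^{\complement K}_{2,1}$) to bootstrap from equality of the $w_y$-integrals to equality of the $B^{\complement K}_{\cdot,2}$ measures via $P^{*2}_{B^{\complement K}_{1,2}}=P^{*2}_{B^{\complement K}_{2,2}}$. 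Once that reversal is secured — essentially a careful re-reading of the proof of Theorem \ref{th1} with the roles of hypothesis and conclusion interchanged — the remaining links are either direct citations or formal decompositions, and the $\mu$-coordinate is handled entirely by reference to \cite{smyrnel2}.
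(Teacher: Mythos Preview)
Your overall architecture is close to the paper's, and the $\mu$-coordinate, the links (i)$\Leftrightarrow$(ii), (ii)$\Rightarrow$(v), (iii)$\Rightarrow$(ii), and (ii)$\Leftrightarrow$(vi) are essentially handled as in the paper. The real issue is your treatment of the block (iii)--(iv)--(v), where there is both a direction error and a genuine gap.

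First, the direction error: you write ``(iv)$\Rightarrow$(iii) is formal: a pure potential pair is a particular potential pair.'' That inclusion gives the \emph{opposite} implication. Condition (iii) is the statement that $\int(p_1-q_1)\,d\lambda=0$ for \emph{all} potential pairs with support in $\complement K$; (iv) restricts the test class to pure potential pairs, hence is the weaker statement. So (iii)$\Rightarrow$(iv) is trivial, and it is (iv)$\Rightarrow$(iii) that needs an argument. The paper records exactly this: ``(iii)$\Rightarrow$(iv). This is straightforward because (iv) is a particular case of (iii).''

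Second, the gap: your decomposition argument for the nontrivial direction writes any potential pair as a pure pair plus $(s_1,0)$ with $s_1$ a $1$-potential that is $1$-harmonic on $\omega$, and then asserts $\int s_1\,d\lambda=0$ ``because $(s_1,0)$ is itself pure biharmonic on $\omega$.'' But $(s_1,0)$ is \emph{not} a pure potential pair (the minimal first component over the zero second component is $0$), so (iv) does not apply to it; and invoking the biharmonicity of $(s_1,0)$ on $\omega$ is an appeal to (v), not to (iv). In other words, your argument for (iv)$\Rightarrow$(iii) silently uses either (v) or the $1$-normality of $\lambda$, neither of which you have derived from (iv) alone. As written, (iii) is never reached in your cycle except through this broken step.

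The paper avoids this by not attempting (iv)$\Rightarrow$(iii) at all. Its route is: (iii)$\Leftrightarrow$(v) directly (using \cite[Proposition~1.7]{smyrnel6} to write $u_i=p_i-q_i$ on a smaller $\omega'$ for (iii)$\Rightarrow$(v), and shrinking $\omega$ to miss the supports for (v)$\Rightarrow$(iii)); then (iii)$\Rightarrow$(iv) trivially; then (iv)$\Rightarrow$(i) by testing against the Green's pair $(w_y,p^2_y)$ and its scalar multiple $(kw_y,kp^2_y)$ with $k\neq 1$, so that the difference $(k-1)w_y$ forces $\int w_y\,d\lambda=0$ for every $y\in\complement K$, i.e.\ $w^*_\lambda=0$ on $\complement K$, and Theorem~\ref{th1} closes the loop. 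If you want to keep your own route, the cleanest repair is to insert this $(k-1)$-trick to deduce $w^*_\lambda=0$ (hence $1$-normality of $\lambda$) directly from (iv), after which your decomposition argument for (iv)$\Rightarrow$(iii) becomes legitimate.
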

\begin{proof}(i) $\Leftrightarrow$ (ii). We have already established the first part of Theorem \ref{theorem2} in the proof of Theorem \ref{th1}. Concerning the second part, we can see that these implications are well known in harmonic spaces (cf. \cite{smyrnel2}).
\\(i) $\Rightarrow$ (v). This is proved in Proposition \ref{proposition1}.
\\(v) $\Rightarrow$ (i). Suppose there exist points $y_1, y_2\in \complement K$ where $w^{*}_{\lambda_1}(y_1)\not=w^{*}_{\lambda_2}(y_1)$,\quad$p^{2*}_{\mu_1}(y_2)\not=p^{2*}_{\mu_2}(y_2)$; we take as $(u_1, u_2)$ the Green's pair $(w_{y}, p^{2}_{y})$ and we have $\int w_{y_1}(x)d\lambda_{1}(x) = \int w_{y_1}(x)d\lambda_{2}(x)$ as well as $\int p^{2}_{y_2}(x)d\mu_{1}(x) = \int p^{2}_{y_2}(x)d\mu_{2}(x)$, therefore we get $w^{*}_{\lambda_1}(y_{1}) = w^{*}_{\lambda_2}(y_{1})$ and $p^{2*}_{\mu_1}(y_{2}) = p^{2*}_{\mu_2}(y_{2})$, which contradict our assumptions (cf. Theorem \ref{th1}).
\\(iii) $\Rightarrow$ (v). By \cite[Proposition 1.7.]{smyrnel6}, there are two continuous potential pairs $(p_1, p_2)$, and $(q_1, q_2)$, which are biharmonic on a relatively compact open set $\omega'$, with $K\subset\omega'\subset\overline{\omega'}\subset\omega$, and such that $u_i = p_i - q_i$ on $\omega'$, (i=1,2).
\\(v) $\Rightarrow$ (iii). We choose an open set $U\supset K$ such that the supports of the potential pairs $(p_1, p_2)$, and $(q_1, q_2)$ are not contained in $U$; hence, these pairs are biharmonic on $U$.
\\(iii) $\Rightarrow$ (iv). This is straightforward because (iv) is a particular case of (iii).
\\(iv) $\Rightarrow$ (i). Suppose there exist two points $y_1, y_2\in \complement K$ such that $w^{*}_{\lambda_1}(y_1)\not=w^{*}_{\lambda_2}(y_1)$ and $p^{2*}_{\mu_1}(y_2)\not=p^{2*}_{\mu_2}(y_2)$. We take as pure potential pairs supported on $\complement K$, the Green's pairs $(w_{y}, p^{2}_{y})$ and $(kw_{y}, kp^{2}_{y})$, where $k>0$, $k\not=1$. Therefore, we obtain $\int (kw_{y_1}(x)- w_{y_1}(x))d\lambda_{1}(x) = \int (kw_{y_1}(x)-w_{y_1}(x))d\lambda_{2}(x)$ and $(k-1)p^{2*}_{\mu_1}(y_2)=(k-1)p^{2*}_{\mu_2}(y_2)$; these conclusions contradict our assumptions (see also Theorem \ref{th1}).
\\ (ii)$\Rightarrow$(vi). $(\lambda, 0)=(\xi, 0)+(\sigma, 0)$, with  $\sigma$  the part of $\lambda$ supported by the set of points where $\complement K$ is not $1$-thin. Setting $\Sigma:=(\sigma, 0)$, we have $(\lambda, 0)^{\complement K}=(\Lambda^{\complement K}_{1}, \Lambda^{\complement K}_{2})=(\Xi^{\complement K}_{1}, \Xi^{\complement K}_{2})+(\Sigma^{\complement K}_{1}, \Sigma^{\complement K}_{2})$. On the other hand, we know that $\Sigma^{\complement K}_{1}=\sigma$. As $\Lambda^{\complement K}_{1}=0$, we deduce that $\Sigma^{\complement K}_{1}+\Xi^{\complement K}_{1}=0$; it follows that $\lambda=\xi+\sigma=\xi-\Xi^{\complement K}_{1}$. Furthermore, since $\Lambda^{\complement K}_{2}=0$, we obtain $\Xi^{\complement K}_{2}+\Sigma^{\complement K}_{2}=0$. Finally, in view of \cite[Remark 2.12]{smyrnel4}), we conclude that $\Sigma^{\complement K}_{2}=0$ (see also \cite[Theorem 7.13.]{smyrnel3}).
\\(vi)$\Rightarrow$(i). We know that $\int P^{\complement K}_{1}d\xi = \int p_1d\Xi^{\complement K}_{1} + \int p_2d\Xi^{\complement K}_{2}$, where $(p_1, p_2)$ is a potential pair; since $\Xi^{\complement K}_{2}=0$, it follows that $\int P^{\complement K}_{1}d\xi = \int p_1d\Xi^{\complement K}_{1}$. Now, if $(p_1, p_2)$ is the Green's pair $(w_{y},p^{2}_{y})$, then $\int W^{\complement K}_{y}(x)d\xi(x) = \int w_{y}(z)d\Xi^{\complement K}_{1}(z)$, that is, $\int W^{*\complement K}_{x}(y)d\xi(x) = \int w^{*}_{z}(y)d\Xi^{\complement K}_{1}(z)$ in view of Lemma \ref{lemma1}. As for $x\in K$, $W^{*\complement K}_{x} = w^{*}_{x}$ holds on $\complement K$, we deduce that $\int w^{*}_{x}(y)d\xi(x) = \int w^{*}_{z}(y)d\Xi^{\complement K}_{1}(z)$; therefore, $w^{*}_{\lambda}=0$ on $\complement K$.

\begin{note} We point out that the implication (vi)$\Rightarrow$(ii) can be established, by reversing the arguments in the proof of (ii)$\Rightarrow$(vi). We can see in the proof of \cite[Proposition 3]{smyrnel2} that $\sigma = -\Xi^{\complement K}_{1}$ holds for every $1$-normal measure.\end{note}

\end{proof}

\begin{theorem}\label{theorem3} Let $K$ be a compact subset of $\Omega$. The following are equivalent:

\begin{itemize}\item[(i)] There exists a pure binormal pair of measures $(\lambda, 0)$ for the compact set $K$, with $\lambda\not=0$.\item[(ii)] $\complement K$ is $1$-thin for at least one point of $K$.\end{itemize}
\end{theorem}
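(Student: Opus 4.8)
The plan is to get (i)$\Rightarrow$(ii) directly from the characterization already established in Theorem~\ref{theorem2}, and to prove (ii)$\Rightarrow$(i) by imitating the classical construction of a nonzero normal measure from a thin point (Choquet--Deny~\cite{choquet-deny}, \cite{smyrnel2}), now carried out with the adjoint biharmonic Green kernel.

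\emph{(i)$\Rightarrow$(ii).} Suppose $(\lambda,0)$ is pure binormal for $K$ with $\lambda\neq 0$. Apply Theorem~\ref{theorem2} with $\mu=0$ (so that $M=(0,0)$ is trivially $2$-normal). Its condition (vi) gives the representation $\lambda=\xi-\Xi^{\complement K}_{1}$, where $\xi$ is the part of $\lambda$ supported by the set $E$ of those points of $K$ at which $\complement K$ is $1$-thin, and $(\xi,0)^{\complement K}=(\Xi^{\complement K}_{1},\Xi^{\complement K}_{2})$. If $E=\varnothing$, then $\xi=0$, hence $\Xi^{\complement K}_{1}=(0,0)^{\complement K}_{1}=0$, hence $\lambda=0$, a contradiction. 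Thus $E\neq\varnothing$, which is exactly (ii).

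\emph{(ii)$\Rightarrow$(i).} Fix $x_{0}\in K$ at which $\complement K$ is $1$-thin, let $(w^{*}_{x_{0}},p^{1*}_{x_{0}})$ be the adjoint Green's pair of support $\{x_{0}\}$, and let $(W^{*,\complement K}_{x_{0}},P^{1*,\complement K}_{x_{0}})$ be its swept pair on the open set $\complement K$. Since $x_{0}\notin\complement K$, the pair $(w^{*}_{x_{0}},p^{1*}_{x_{0}})$ is adjoint biharmonic on $\complement K$, so its sweep is again an adjoint pure potential pair, say $(W^{*,\complement K}_{x_{0}},P^{1*,\complement K}_{x_{0}})=(w^{*}_{\sigma},p^{1*}_{\sigma})$ with associated nonnegative measure $\sigma$ (cf. \cite{smyrnel7,smyrnel8}). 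As the original pair is already adjoint biharmonic on $\complement K$, and $\complement K$ is $1$-thin at every interior point of $K$, the measure $\sigma$ charges no point of $\complement K$ and no interior point of $K$, hence is carried by $\partial K$; therefore $\lambda:=\epsilon_{x_{0}}-\sigma$ is carried by $K$. Using that the reduced function coincides with the original on the target set, $W^{*,\complement K}_{x_{0}}=w^{*}_{x_{0}}$ on $\complement K$, so
$$w^{*}_{\lambda}=w^{*}_{x_{0}}-w^{*}_{\sigma}=w^{*}_{x_{0}}-W^{*,\complement K}_{x_{0}}=0\qquad\text{on }\complement K .$$
Since $\lambda$ is carried by $K$, the pair $(w^{*}_{\lambda},p^{1*}_{\lambda})$ is adjoint biharmonic, hence compatible, on $\complement K$, so $p^{1*}_{\lambda}=0$ there too; by Theorem~\ref{th1}, $(\lambda,0)$ is pure binormal for $K$.

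There remains the point that $\lambda\neq 0$, and this is where the hypothesis is really used. The second component $P^{1*,\complement K}_{x_{0}}=p^{1*}_{\sigma}$ is the adjoint $1$-reduced function of the adjoint $1$-Green potential $p^{1*}_{x_{0}}$ on $\complement K$; its pole at $x_{0}$ is retained — equivalently $\sigma$ charges $x_{0}$ — exactly when $\complement K$ is \emph{not} adjoint $1$-thin at $x_{0}$, that is, by the assumed proportionality of $1$-Green and adjoint $1$-Green potentials, exactly when $\complement K$ is \emph{not} $1$-thin at $x_{0}$. Since $\complement K$ \emph{is} $1$-thin at $x_{0}$, we get $\sigma(\{x_{0}\})=0$, so $\sigma\neq\epsilon_{x_{0}}$ and $\lambda\neq 0$; this proves (i). The one nonroutine ingredient is precisely this last analysis: identifying the swept adjoint Green's pair as an adjoint pure potential pair, locating the carrier of its associated measure, and relating the pole of that measure at $x_{0}$ to $1$-thinness of $\complement K$ there; this rests on the structure theory of \cite{smyrnel3,smyrnel7,smyrnel8} and on the standing hypotheses (ellipticity, proportionality of Green and adjoint Green potentials, a basis of completely determining domains), the proportionality being what allows passage between $1$-thinness in $(\Omega,\mathcal H_{1})$ and in its adjoint.
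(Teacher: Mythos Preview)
Your argument for (i)$\Rightarrow$(ii) is correct and is essentially the paper's: both use the representation $\lambda=\xi-\Xi^{\complement K}_{1}$ from Theorem~\ref{theorem2}(vi) and observe that $\xi$ (hence $\lambda$) vanishes as soon as the set of $1$-thin points is empty.

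For (ii)$\Rightarrow$(i), however, your construction breaks down at the step where you assert that the swept adjoint Green's pair is again a \emph{pure} adjoint potential pair $(w^{*}_{\sigma},p^{1*}_{\sigma})$. Sweeping does not preserve purity in biharmonic spaces. Indeed, combining Lemma~\ref{lemma1} with \cite[Theorem 7.11]{smyrnel3} (as in the proof of Theorem~\ref{th1}) gives, for every $y$,
\[
W^{*,\complement K}_{x_{0}}(y)=W^{\complement K}_{y}(x_{0})=\int w_{y}\,d\mu^{\complement K}_{x_{0}}+\int p^{2}_{y}\,d\nu^{\complement K}_{x_{0}}
= w^{*}_{\mu^{\complement K}_{x_{0}}}(y)+p^{2*}_{\nu^{\complement K}_{x_{0}}}(y),
\]
so $W^{*,\complement K}_{x_{0}}$ equals a pure adjoint potential $w^{*}_{\sigma}$ only when $\nu^{\complement K}_{x_{0}}=0$, which fails in general. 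Consequently your candidate $\lambda=\epsilon_{x_{0}}-\sigma$ either is not well defined as you describe it, or---if one takes the natural choice $\sigma=\mu^{\complement K}_{x_{0}}$---produces a $1$-normal measure for which $(\lambda,0)$ is \emph{not} pure binormal: this is exactly the phenomenon exhibited in Proposition~\ref{proposition5}(ii) (with $K=\overline{\omega}$ a ball) and isolated in Proposition~\ref{proposition7}, where the obstruction is precisely $\Xi^{\complement K}_{2}=\nu^{\complement K}_{x_{0}}\neq 0$.

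The paper's route is different: it stays on the measure side, using the characterization (vi) of Theorem~\ref{theorem2}. One constructs the pure binormal pair via $\lambda=\xi-\Xi^{\complement K}_{1}$ with $\xi$ supported on the $1$-thin points, and then argues $\lambda\neq 0$ from the fact that $\xi$ and $\Xi^{\complement K}_{1}$ are carried by disjoint sets (the unstable points of $K$ versus the points where $\complement K$ is not $1$-thin). That argument does not require the swept adjoint Green's pair to be pure, and it keeps track of the second swept component $\Xi^{\complement K}_{2}$, which is exactly what your construction loses.
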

\begin{proof} (i)$\Rightarrow$(ii). In view of Theorem \ref{theorem2}, we have $\Lambda^{\complement K}_{i}=0$, ($i=1,2$), and by assumption, $\lambda\not=0$. If $\complement K$ is not $1$-thin at any point of $K$, then we will obtain
$\lambda=\Lambda^{\complement K}_1$ (cf. \cite[Proposition 3]{smyrnel2});
since $\Lambda^{\complement K}_{1} = B^{\complement K}_{1,1}-B^{\complement K}_{2,1} = 0$, it follows that $\lambda = 0$. This is a contradiction. \\(ii)$\Rightarrow$(i). Given a pure binormal pair $(\lambda, 0)$, suppose that $\lambda = 0$. By assumption and in view of Theorem \ref{theorem2}, we will obtain $\lambda - \Lambda^{\complement K}_{1} = \xi - \Xi^{\complement K}_1 = 0$ and $\xi\not=0$ (since $\complement K$ is $1$-thin for at least one point of $K$). As the measure $\xi$ is supported by the set of unstable points of $K$, and $\Xi^{\complement K}_{1}$ is supported by the set of points where $\complement K$ is not $1$-thin, we deduce that $\xi\not=\Xi^{\complement K}_{1}$ (see \cite[4.6. Proposition]{hansen} and \cite[Lemma VIII,2]{brelot1b}); therefore, $\lambda\not=0$, which is a contradiction.
\end{proof}

We denote by $\mathcal{M}$ the set of measures on $\Omega$. We endow it with the vague topology, that is, the topology of the  simple convergence on the space of continuous functions with compact support. Similarly, we consider the set $\mathcal{M} \times  \mathcal{M}$ with the respective vague topology. We also denote by $\mathcal{K}_{i}$ the set of points of $K$, where $\complement K$ is $i$-thin, and by $\mathcal{N}$ (resp. $\mathcal{N}_{i})$, the set of pure binormal pairs of measures (resp. the set of $i$-normal measures, $i=1,2$) for $K$. Finally, we recall that $(\epsilon_x, 0)^{\complement K} = (\mu^{\complement K}_{x},\nu^{\complement K}_{x})$, where $\mu^{\complement K}_{x}$ is the swept measure of $\epsilon_x$ on $\complement K$ in the $1$-harmonic space, and $(0, \epsilon_x)^{\complement K} = (0, \lambda^{\complement K}_{x})$, where $\lambda^{\complement K}_{x}$ is the swept measure of $\epsilon_x$ on $\complement K$ in the $2$-harmonic space.

\begin{theorem}\label{theorem6}

\

\begin{itemize}
\item[(i)] The pairs $(\epsilon_x-\mu^{\complement K}_{x}, \nu^{\complement K}_{x})$, where $x \in \mathcal{K}_1$, form a total subset of $\mathcal{N}$.
\item[(ii)] The measures $(\epsilon_x-\mu^{\complement K}_{x})$, where $x \in \mathcal{K}_1$, form a total subset of $\mathcal{N}_1$.
\item[(iii)] The measures $(\epsilon_x-\lambda^{\complement K}_{x})$, where $x \in \mathcal{K}_2$, form a total subset of $\mathcal{N}_2$.
\end{itemize}
\end{theorem}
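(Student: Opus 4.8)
The plan is to prove all three items by a single weak-$\ast$ duality argument, items (ii) and (iii) being literal transcriptions of (i) inside the $1$- and $2$-harmonic spaces; so I describe (i). I would first recall that the topological dual of $\mathcal M\times\mathcal M$ equipped with the vague topology is $C_c(\Omega)\times C_c(\Omega)$, acting by $(f_1,f_2)\mapsto\big((\lambda,\mu)\mapsto\int f_1\,d\lambda+\int f_2\,d\mu\big)$, and that the pairs $(\epsilon_x-\mu^{\complement K}_x,\nu^{\complement K}_x)$ with $x\in\mathcal K_1$ indeed lie in $\mathcal N$ (this is built into Theorem \ref{theorem2}: such a pair is $(\epsilon_x,0)$ minus its $\complement K$-sweep, and a $\complement K$-swept pair has vanishing $\complement K$-sweep by idempotence of balayage). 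Consequently, by the Hahn--Banach theorem, the family is total in $\mathcal N$ exactly when every functional $(f_1,f_2)$ annihilating all of these pairs annihilates all of $\mathcal N$, so everything reduces to proving this implication for one fixed such $(f_1,f_2)$.

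Unpacking the duality and using $(\mu^{\complement K}_x,\nu^{\complement K}_x)=(\epsilon_x,0)^{\complement K}$, the hypothesis on $(f_1,f_2)$ reads
$$f_1(x)=\int f_1\,d\mu^{\complement K}_x-\int f_2\,d\nu^{\complement K}_x,\qquad x\in\mathcal K_1 .$$
I would then take an arbitrary $(\lambda,0)\in\mathcal N$ and apply the structural description in Theorem \ref{theorem2}(vi): $\lambda=\xi-\Xi^{\complement K}_1$ with $\Xi^{\complement K}_2=0$, where $\xi$ is the part of $\lambda$ carried by $\mathcal K_1$ and $(\Xi^{\complement K}_1,\Xi^{\complement K}_2)=(\xi,0)^{\complement K}$. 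Since balayage is $\sigma$-additive in the swept measure, hence commutes with integration against $\xi$, one has $(\xi,0)^{\complement K}=\int(\epsilon_x,0)^{\complement K}\,d\xi(x)$, i.e.\ $\Xi^{\complement K}_1=\int\mu^{\complement K}_x\,d\xi(x)$ and $\Xi^{\complement K}_2=\int\nu^{\complement K}_x\,d\xi(x)=0$.

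Combining these, the functional evaluated at $(\lambda,0)$ is simply $\int f_1\,d\lambda$, and I would compute it using the decomposition of $\lambda$, then Fubini's theorem (all integrands bounded, all measures finite), then the displayed hypothesis on $\mathcal K_1\supseteq\supp\xi$, and finally $\Xi^{\complement K}_2=0$:
$$\int f_1\,d\lambda=\int f_1\,d\xi-\int f_1\,d\Xi^{\complement K}_1=\int\Big(f_1(x)-\int f_1\,d\mu^{\complement K}_x\Big)d\xi(x)=-\int\Big(\int f_2\,d\nu^{\complement K}_x\Big)d\xi(x)=-\int f_2\,d\Xi^{\complement K}_2=0 .$$
Thus $(f_1,f_2)$ annihilates $\mathcal N$, proving (i). For (ii) and (iii) I would repeat the argument in $\mathcal M$ instead of $\mathcal M\times\mathcal M$, with the biharmonic sweeping replaced by the $1$-harmonic (resp.\ $2$-harmonic) sweeping and Theorem \ref{theorem2}(vi) by its classical harmonic-space counterpart from \cite{smyrnel2}; there the $f_2$-term is absent, $\mu^{\complement K}_x$ (resp.\ $\lambda^{\complement K}_x$) is the swept measure of $\epsilon_x$ in the $1$-harmonic (resp.\ $2$-harmonic) space, and the same chain of equalities gives $\int f\,d\lambda=0$ for every $1$-normal (resp.\ $2$-normal) measure $\lambda$.

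The purely formal parts are the weak-$\ast$ duality and the use of Fubini (which additionally needs only Borel dependence of $x\mapsto\mu^{\complement K}_x$ and $x\mapsto\nu^{\complement K}_x$ so that the integrals against $\xi$ are meaningful). The one substantial ingredient, and the step I expect to be the crux, is the representation of an arbitrary pure binormal pair (resp.\ $i$-normal measure) as $\xi$ minus the $\complement K$-sweep of $\xi$, with $\xi$ carried by the set of points of $K$ at which $\complement K$ is $1$-thin (resp.\ $i$-thin); but this is precisely Theorem \ref{theorem2}(vi) and its harmonic-space analogue, so once those are in hand the three totality statements follow with essentially no further work.
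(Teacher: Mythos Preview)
Your argument is correct and reaches the same conclusion, but the route is genuinely different from the paper's. The paper gives a \emph{direct approximation}: it uses Theorem~\ref{theorem2}(vi) to write an arbitrary $(\lambda,0)\in\mathcal N$ as $(\xi-\Xi^{\complement K}_1,\,-\Xi^{\complement K}_2)$ with $\Xi^{\complement K}_2=0$, then approximates the integrals $\xi(f)$, $\Xi^{\complement K}_1(f)=\int\mu^{\complement K}_x(f)\,d\xi(x)$ and $\Xi^{\complement K}_2(f)=\int\nu^{\complement K}_x(f)\,d\xi(x)$ simultaneously by Riemann-type sums $\sum\lambda_i\epsilon_{x_i}(f)$, $\sum\lambda_i\mu^{\complement K}_{x_i}(f)$, $\sum\lambda_i\nu^{\complement K}_{x_i}(f)$ with a \emph{common} choice of points $x_i\in\mathcal K_1$ and weights $\lambda_i$ (the footnote on a suitable partition of $\mathcal K_1$ addresses exactly this synchronisation). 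You instead run the weak-$\ast$ duality / Hahn--Banach test: a functional $(f_1,f_2)\in C_c(\Omega)\times C_c(\Omega)$ killing all the generating pairs must satisfy $f_1(x)=\int f_1\,d\mu^{\complement K}_x-\int f_2\,d\nu^{\complement K}_x$ on $\mathcal K_1$, and then the same structural decomposition plus Fubini forces $\int f_1\,d\lambda=-\int f_2\,d\Xi^{\complement K}_2=0$.

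Both proofs rest on the same substantive input, Theorem~\ref{theorem2}(vi) and the disintegration $\Xi^{\complement K}_j(f)=\int(\cdot)^{\complement K}_x(f)\,d\xi(x)$. What your approach buys is that it avoids the measure-theoretic bookkeeping of choosing a single partition that works for three integrals at once; what the paper's approach buys is that it is entirely constructive and makes the approximating combinations explicit. Your remark that (ii) and (iii) are the harmonic-space shadows of (i), with the $f_2$-term absent, matches exactly how the paper dispatches them.
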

\begin{proof} (i) First, it is well known that the swept pair $(\Xi^{\complement K}_{1},\Xi^{\complement K}_{2})$ of $(\xi, 0)$ on $\complement K$, is expressed by $\Xi^{\complement K}_{1}(f)$= $\int \mu^{\complement K}_{x}(f)d\xi(x)$, $\Xi^{\complement K}_{2}(f)$ =$ \int \nu^{\complement K}_{x}(f)d\xi(x)$, where $f$ is any continuous function with compact support. Next, we recall that by definition of the integral, there exist points $x_{n}$ of $\mathcal{K}_1$ such that
\begin{equation}\label{eq1}
\mid\int \mu^{\complement K}_{x}(f)d\xi(x)-\overset{N}{\underset{n=1}{\sum}}\lambda_n\mu^{\complement K}_{x_n}(f) \mid<\epsilon' \text{ with } \overset{N}{\underset{n=1}{\sum}}\lambda_n = \xi(\mathcal{K}_1).\footnote{By considering a suitable partition of $\mathcal{K}_1$, we can choose the (same) coefficients $\lambda_j$, such that relations \eqref{eq1} and \eqref{eq2} are satisfied (cf. \cite[p. 108-109]{chilov}, \cite{bourbaki}, and \cite[p. 126-127]{cour}.}
\end{equation}
Moreover, according to \cite[Theorem 1, chap.III, \S 2, No. 4]{bourbaki}, there exists a linear combination $ \overset{p}{\underset{j=1}{\sum}}\lambda_j\epsilon_{x_j}$ such that
\begin{equation}\label{eq2}
\mid \overset{p}{\underset{j=1}{\sum}}\lambda_j\epsilon_{x_j}(f)-\xi(f)\mid<\epsilon'' \text{ and } \overset{p}{\underset{j=1}{\sum}}\lambda_j = \xi(\mathcal{K}_1).
\end{equation}
Consequently, combining \eqref{eq1} and \eqref{eq2}, we can write
$$\overset{q}{\underset{i=1}{\sum}}\lambda_i(\epsilon_{x_i}-\mu^{\complement K}_{x_i})(f)-\epsilon\leq \xi(f)-\Xi^{\complement K}_{1}(f)\leq \overset{q}{\underset{i=1}{\sum}}\lambda_i(\epsilon_{x_i}-\mu^{\complement K}_{x_i})(f) + \epsilon ,$$ $$\text{and }-\epsilon \leq  \overset{q}{\underset{i=1}{\sum}}\lambda_i\nu^{\complement K}_{x_i}(f)\leq \epsilon\quad with\quad \overset{q}{\underset{i=1}{\sum}}\l_i = \xi(\mathcal{K}_1). $$
Since, by Theorem \ref{theorem2}, $(\lambda, 0) = (\xi, 0) - (\Xi^{\complement K}_{1}, \Xi^{\complement K}_{2})$, the result follows. Assertions (ii) and (iii) can be proved in the same way.
\end{proof}

Finally, we shall examine how normal and binormal measures are connected. \begin{proposition}\label{proposition5}

\

\begin{itemize}
\item[(i)] If $(\lambda, 0)$ is a pure binormal pair for the compact set $K$, then the measure $\lambda$ is $1$-normal for $K$.\item[(ii)] Conversely, suppose that $\lambda$ is a $1$-normal measure for $K$. Then the pair $(\lambda, 0)$ is not necessarily a pure binormal pair, even if the $j$-harmonic spaces coincide ($j=1,2$).
\end{itemize}
\end{proposition}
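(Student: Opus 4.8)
For part (i), the plan is to invoke Theorem \ref{th1}: since $(\lambda,0)$ is pure binormal, the adjoint pure potential pair $(w^{*}_{\lambda},p^{1*}_{\lambda})$ vanishes on $\complement K$. In particular $w^{*}_{\lambda_1}=w^{*}_{\lambda_2}$ on $\complement K$. But $w^{*}_{x}$ is (a constant multiple of) the $1$-adjoint Green potential associated with $x$, so $w^{*}_{\lambda_i}$ is precisely the $1$-adjoint potential of $\lambda_i$; the equality $w^{*}_{\lambda_1}=w^{*}_{\lambda_2}$ on $\complement K$ is exactly the statement that the $1$-adjoint potential of $\lambda$ vanishes on $\complement K$, which by the characterization of $1$-normal measures (the one recalled in Section \ref{sec1}, cf. \cite{smyrnel2}) says that $\lambda$ is $1$-normal for $K$. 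This is in fact already observed parenthetically in the first lines of the proof of Theorem \ref{th1} (``if $(\lambda,0)$ is pure binormal, then $\lambda$ is $1$-normal''), so part (i) is essentially a restatement and the proof can simply cite that passage.

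For part (ii), the plan is to exhibit a counterexample, and the natural place to look is Section \ref{sec:sec3}. Take $\mathcal{H}_1=\mathcal{H}_2$ (so the associated $j$-harmonic spaces coincide, $j=1,2$) and choose $\lambda$ to be a nonzero $1$-normal measure for some compact $K$ whose $1$-adjoint potential $w^{*}_{\lambda}$ vanishes on $\complement K$ but whose companion $p^{1*}_{\lambda}$ does \emph{not} vanish on $\complement K$. The point is that $1$-normality controls only the $w^{*}$-component, whereas pure binormality (Theorem \ref{th1}(ii)) requires the \emph{whole} adjoint pure potential pair $(w^{*}_{\lambda},p^{1*}_{\lambda})$ to vanish on $\complement K$; since $(w^{*}_{\lambda},p^{1*}_{\lambda})$ is adjoint biharmonic, hence compatible, on $\complement K$, the vanishing of $w^{*}_{\lambda}$ there forces $p^{1*}_{\lambda}=0$ there only if the pair is pure in the appropriate sense, and a generic $1$-normal $\lambda$ need not satisfy this. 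Concretely, one picks $\lambda$ with $w^{*}_{\lambda_1}=w^{*}_{\lambda_2}$ on $\complement K$ but $p^{1*}_{\lambda_1}\neq p^{1*}_{\lambda_2}$ at some point of $\complement K$; then $\lambda$ is $1$-normal for $K$ by construction, yet $(\lambda,0)^{\complement K}\neq(0,0)$ because its second swept component, which is built from $p^{1*}_{\lambda}$, is nonzero. Thus $(\lambda,0)$ fails to be pure binormal.

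The main obstacle is producing an \emph{explicit} $\lambda$ for which $w^{*}_{\lambda}$ vanishes off $K$ while $p^{1*}_{\lambda}$ does not. One clean way is to start from the classical biharmonic picture of Section \ref{sec:sec3}: in $\R^n$ with $L_1=L_2=\Delta$, take a measure $\lambda_0$ supported in a small ball $\omega_1$ and use the iterated-kernel construction of Example $\mathbf{3)}$ (after Choquet--Deny), which gives $U^{\lambda'}=\int G_2(\cdot,z)\,d\lambda_0(z)$; if one arranges $U^{\lambda_0}=\int G_1(\cdot,z)\,d\lambda_0(z)\equiv 0$ on $\complement\widehat E$ (i.e. $\lambda_0$ itself is already a normal/zero-potential measure off its hull) but $U^{\lambda'}\not\equiv 0$ there, then the pair inherits exactly the asymmetry we want, since $w^{*}_{\lambda}$ corresponds to the iterated potential and $p^{1*}_{\lambda}$ to the simple one. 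Alternatively, and perhaps more transparently, one takes $\lambda=\epsilon_y-\rho^{\omega}_y$ for a suitable $1$-regular $\omega$ (a classical normal measure, hence $1$-normal) and checks directly that the iterated/biharmonic potential it generates does not vanish on the complement of its support; the verification that $w^{*}_{\lambda}=0$ there is the easy half (it is the defining property of the normal measure), and the verification that $p^{1*}_{\lambda}\neq 0$ there — i.e. that the mean-value identity for $u_1$ in the Corollary is genuinely stronger than the one for $u_2$ — is where the actual work lies. A short remark connecting this to the fact that in Corollary after Theorem \ref{th1} the two-sided coincidence $\mathcal{H}_1=\mathcal{H}_2$ was needed to even \emph{state} $p^{1*}_{\lambda}=p^{2*}_{\lambda}$, not to make it vanish, will make clear why coinciding harmonic spaces do not rescue the implication.
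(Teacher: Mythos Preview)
Your argument for part (i) reaches the right conclusion --- one can simply cite the parenthetical remark in the proof of Theorem~\ref{th1} --- but the supporting explanation has the roles of $w^{*}_{\lambda}$ and $p^{1*}_{\lambda}$ reversed. In the adjoint Green's pair $(w^{*}_{x},p^{1*}_{x})$, the $1$-adjoint Green potential is $p^{1*}_{x}$, not $w^{*}_{x}$; $1$-normality of $\lambda$ means $p^{1*}_{\lambda}=0$ on $\complement K$ (compare the line after Theorem~\ref{th1}: $(0,\mu)$ is $2$-normal iff $P^{2*}_{\mu}=0$ on $\complement K$). The paper's proof of (i) uses compatibility in the correct direction: pure binormality gives $w^{*}_{\lambda}=0$ on $\complement K$, and since the adjoint biharmonic pair $(w^{*}_{\lambda},p^{1*}_{\lambda})$ is compatible, this forces $p^{1*}_{\lambda}=0$ there, i.e.\ $\lambda$ is $1$-normal.

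This swap is fatal for your plan in part (ii). You propose to find $\lambda$ with $w^{*}_{\lambda}=0$ on $\complement K$ but $p^{1*}_{\lambda}\neq 0$ there; compatibility of $(w^{*}_{\lambda},p^{1*}_{\lambda})$ on $\complement K$ makes this \emph{impossible} --- vanishing of the first component always forces that of the second. The correct search is the opposite: a $1$-normal $\lambda$ has $p^{1*}_{\lambda}=0$ on $\complement K$, and one must show that $w^{*}_{\lambda}$ need not vanish there. The paper does exactly this: it first observes (via $\Gamma^{*}_{1}$) that $p^{1*}_{\lambda}=0$ on $\complement K$ only forces $w^{*}_{\lambda_1}-w^{*}_{\lambda_2}$ to be an adjoint $2$-harmonic function $h^{*}_{2}$, not zero, and then gives the concrete Laplacian counterexample $\lambda=\epsilon_{x}-\mu^{\omega}_{x}$ with $K=\overline{\omega}$, which is normal but not pure binormal. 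You actually land on this same example at the end of your proposal, but your stated verification (``$w^{*}_{\lambda}=0$ is the defining property of the normal measure, and $p^{1*}_{\lambda}\neq 0$ is where the work lies'') is again backwards: it is $p^{1*}_{\lambda}=0$ that comes for free from normality, and $w^{*}_{\lambda}\neq 0$ on $\complement\overline{\omega}$ (equivalently: the single-sphere mean $\int u_{1}\,d\mu^{\omega}_{x}$ does not recover $u_{1}(x)$ for a biharmonic $u_{1}$) that furnishes the counterexample.
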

\begin{proof} (i). Since the pair $(w^{*}_{\lambda}, p^{1*}_{\lambda})$ is biharmonic adjoint on $\complement K$, and therefore compatible, then $w^{*}_{\lambda_1} = w^{*}_{\lambda_2}$ on $\complement K$ implies that $p^{1*}_{\lambda_1} = p^{1*}_{\lambda_2}$ there. Consequently, $\lambda$ is $1$-normal for $K$.\\(ii). If $p^{1*}_{\lambda_1}=p^{1*}_{\lambda_2}$ on $\complement K$, then we assert that $w^{*}_{\lambda_1}=w^{*}_{\lambda_2}+h^{*}_{2}$ on $\complement K$, where $h^{*}_{2}$ is  an adjoint $2$-harmonic function on $\complement K$. Indeed, since the pure potential pairs satisfy  the relations $\Gamma^{*}_{1}w^{*}_{\lambda_1} = p^{1*}_{\lambda_1}$ and $\Gamma^{*}_{1}w^{*}_{\lambda_2}=p^{1*}_{\lambda_2}$ on $\complement K$, we obtain $\Gamma^{*}_{1}(w^{*}_{\lambda_1} - w^{*}_{\lambda_2}) = 0$ on $\complement K$, that is, $w^{*}_{\lambda_1} - w^{*}_{\lambda_2} = h^{*}_{2}$, where\:$h^{*}_{2}$  is an adjoint $2$-harmonic function on the complement of $K$.

Let us now take for the elliptic operator, the Laplacian. We have the inclusion $$\{\lambda: (\lambda, 0)\text{ is pure binormal}\}\subset \{\lambda: \lambda \text{ is a $1$-normal measure}\}.$$ For instance, the measure $\lambda = \epsilon_x - \mu^{\omega}_{x}$, where $\omega$ is a ball, is normal for $K = \overline{\omega}$, but the pair $(\lambda, 0)$ is not a pure binormal pair. On the other hand, the measure $\lambda = \epsilon_x - \alpha_x \mu^{\omega_1}_{x} + \beta_x \mu^{\omega_2}$ is $1$-normal, and the pair $(\lambda, 0)$ is also pure binormal ($\omega_1$, $\omega_2$  are concentric balls, and $\omega_1 \subset \overline{\omega_1}\subset \omega_2$).
\end{proof}
Nevertheless, in general we have:
\begin{proposition}\label{proposition7} Let $\lambda$ be a $1$-normal measure for $K$. Then, $(\lambda, 0)$ is a pure binormal pair iff $\Xi^{\complement K}_{2} = 0$.
\end{proposition}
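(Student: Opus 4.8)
The plan is to use the characterisation of pure binormal pairs from Theorem \ref{th1} together with the structural description of swept pairs provided by Theorem \ref{theorem2}, and to exploit the fact that $\lambda$ is already known to be $1$-normal, so that only the ``second component'' obstruction remains. Recall that $\Lambda = (\lambda, 0)$ and $\Lambda^{\complement K} = (\Lambda^{\complement K}_1, \Lambda^{\complement K}_2)$, and that by Definition \ref{def1}, $(\lambda,0)$ is pure binormal iff $\Lambda^{\complement K}_1 = 0$ and $\Lambda^{\complement K}_2 = 0$. By Theorem \ref{theorem2}(vi), writing $\lambda = \xi + \sigma$, where $\xi$ is the part of $\lambda$ carried by $\mathcal{K}_1$ (the points of $K$ where $\complement K$ is $1$-thin) and $\sigma$ the part carried by the remaining points of $K$, we have $(\xi,0)^{\complement K} = (\Xi^{\complement K}_1, \Xi^{\complement K}_2)$ and $(\sigma,0)^{\complement K} = (\Sigma^{\complement K}_1, \Sigma^{\complement K}_2)$ with $\Sigma^{\complement K}_1 = \sigma$.

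First I would treat the forward implication. Assume $(\lambda,0)$ is pure binormal; then in particular $\Lambda^{\complement K}_2 = 0$, and since $\lambda$ is $1$-normal we have by the Note following Theorem \ref{theorem2} that $\sigma = -\Xi^{\complement K}_1$, whence, decomposing $\Lambda^{\complement K}_2 = \Xi^{\complement K}_2 + \Sigma^{\complement K}_2$ and using \cite[Remark 2.12]{smyrnel4} (or \cite[Theorem 7.13.]{smyrnel3}) to get $\Sigma^{\complement K}_2 = 0$, I deduce $\Xi^{\complement K}_2 = \Lambda^{\complement K}_2 - \Sigma^{\complement K}_2 = 0$. Conversely, assume $\lambda$ is $1$-normal and $\Xi^{\complement K}_2 = 0$. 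Being $1$-normal means $w^{*}_{\lambda_1} = w^{*}_{\lambda_2}$ on $\complement K$ fails in general but $p^{1*}_{\lambda_1} = p^{1*}_{\lambda_2}$ on $\complement K$ holds; more precisely I would use that $1$-normality of $\lambda$ together with Theorem \ref{theorem2}(vi) already gives $\lambda = \xi - \Xi^{\complement K}_1$, i.e. $\Lambda^{\complement K}_1 = 0$ (this is exactly the computation $\sigma = -\Xi^{\complement K}_1$ in \cite[Proposition 3]{smyrnel2}). It then remains to check $\Lambda^{\complement K}_2 = 0$: again $\Lambda^{\complement K}_2 = \Xi^{\complement K}_2 + \Sigma^{\complement K}_2$, the first term vanishes by hypothesis, and the second vanishes because $\sigma$ is carried by points where $\complement K$ is not $1$-thin (so the argument of \cite[Remark 2.12]{smyrnel4} applies). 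Hence $\Lambda^{\complement K} = (0,0)$, i.e. $(\lambda,0)$ is pure binormal.

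The concluding step I would spell out via the adjoint potential characterisation of Theorem \ref{th1}: having $\Lambda^{\complement K}_1 = \Lambda^{\complement K}_2 = 0$ is, by that theorem, equivalent to $(w^{*}_\lambda, p^{1*}_\lambda)$ vanishing on $\complement K$, so one may alternatively phrase the converse direction as: $p^{1*}_{\lambda_1} = p^{1*}_{\lambda_2}$ on $\complement K$ (from $1$-normality) plus $\Xi^{\complement K}_2 = 0$ forces $w^{*}_{\lambda_1} = w^{*}_{\lambda_2}$ on $\complement K$ via Lemma \ref{lemma1} applied to $\xi$, exactly as in the proof of (vi)$\Rightarrow$(i) in Theorem \ref{theorem2}, namely $\int w^{*}_x(y)\,d\xi(x) = \int w^{*}_z(y)\,d\Xi^{\complement K}_1(z)$ and hence $w^{*}_\lambda = 0$ on $\complement K$.

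**The main obstacle** I anticipate is making rigorous that the ``stable part'' $\sigma$ contributes nothing to the second component of the swept pair, i.e. $\Sigma^{\complement K}_2 = 0$ — this is where the hypothesis that $\sigma$ lives off $\mathcal{K}_1$ is essential, and it must be imported cleanly from \cite[Remark 2.12]{smyrnel4} and \cite[Theorem 7.13.]{smyrnel3} rather than re-derived; the rest is bookkeeping with the additive decomposition $\Lambda^{\complement K} = \Xi^{\complement K} + \Sigma^{\complement K}$ and the already-established equivalences of Theorems \ref{th1} and \ref{theorem2}.
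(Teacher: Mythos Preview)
Your proposal is correct and follows essentially the same route as the paper: both rely on Theorem~\ref{theorem2}, specifically condition~(vi) together with the Note (that $\sigma=-\Xi^{\complement K}_1$ for every $1$-normal measure), so that of the two conditions in~(vi) only $\Xi^{\complement K}_2=0$ remains. The paper's proof is literally the single sentence ``This follows immediately from Theorem~\ref{theorem2},'' whereas you have spelled out the decomposition $\Lambda^{\complement K}_2=\Xi^{\complement K}_2+\Sigma^{\complement K}_2$ and the vanishing of $\Sigma^{\complement K}_2$ explicitly; this extra detail is not wrong, but it is redundant since those computations are already absorbed into the proof of (ii)$\Leftrightarrow$(vi) in Theorem~\ref{theorem2}.
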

\begin{proof} This follows immediately from Theorem \ref{theorem2}.
\end{proof}

\nocite{*}
\bibliographystyle{plain}

\hspace{70mm}\scriptsize \emph{Emmanuel P. Smyrnelis}

\hspace{70mm}\scriptsize \emph{University of Athens}

\hspace{70mm}\scriptsize \emph{Department of Mathematics}

\hspace{70mm}\scriptsize \emph{15784 Athens, Greece}

\hspace{70mm}\scriptsize \emph{smyrnel@math.uoa.gr}

\

\hspace{70mm}\scriptsize \emph{Panayotis Smyrnelis}

\hspace{70mm}\scriptsize \emph{University of Athens}

\hspace{70mm}\scriptsize \emph{Department of Mathematics}

\hspace{70mm}\scriptsize \emph{15784 Athens, Greece}

\hspace{70mm}\scriptsize \emph{smpanos@math.uoa.gr}
\end{document}